\newtheorem{theorem}{Theorem}
\newtheorem{lemma}[theorem]{Lemma}
\newtheorem{corollary}[theorem]{Corollary}
\newtheorem{proposition}[theorem]{Proposition}
\newtheorem{oldtheorem}{Theorem}
\theoremstyle{definition}
\theoremstyle{remark}
\numberwithin{equation}{section}
\newcommand{\set}[1]{\left\{#1\right\}}
\newcommand{\abs}[1]{\lvert#1\rvert}
\newcommand{\nm}[1]{\lVert#1\rVert}
\newcommand{\bnm}[1]{\big\Vert#1\big\Vert}
\newcommand{\D}{\mathbb{D}}
\newcommand{\N}{\mathbb{N}}
\newcommand{\C}{\mathbb{C}}
\newcommand{\RM}{\mathcal{R}}
\renewcommand{\phi}{\varphi}
\DeclareMathOperator{\diam}{Diam}
\newcommand{\BMOA}{\rm BMOA}
\newcommand{\BLOCH}{\mathcal{B}}
\newcommand{\ULU}{\mathcal{U}}
                  \def\z{\zeta}
                  \def\vp{\varphi}
\begin{document}

\title[Mean growth and geometric zero distribution of solutions]
{Mean growth and geometric zero distribution of solutions of linear differential equations}
\thanks{The first author is supported by the Academy of Finland \#258125; the second author is supported in part by the grants
MTM2011-24606 and 2014SGR 75; and the third author is supported in part by the Academy of Finland \#268009, by the 
Faculty of Science and Forestry of University of Eastern Finland \#930349, and by the grant MTM2011-26538.}

\author{Janne Gr\"ohn}
\address{Department of Physics and Mathematics, University of Eastern Finland, P.O. Box 111, FI-80101 Joensuu, Finland}
\email{janne.grohn@uef.fi}

\author{Artur Nicolau}
\address{Departament de Matem\`atiques, Universitat Aut\`onoma de Barcelona, 08193, Bellaterra, Barcelona, Spain}
\email{artur@mat.uab.cat }

\author{Jouni R\"atty\"a}
\address{Department of Physics and Mathematics, University of Eastern Finland, P.O. Box 111, FI-80101 Joensuu, Finland}
\email{jouni.rattya@uef.fi}

\subjclass[2010]{Primary 34C10, 34M10}

\keywords{Carleson measure; Hardy space; linear differential equation; oscillation theory; uniform separation.}

\date{\today}


\begin{abstract}
The aim of this paper is to consider certain conditions on 
the coefficient $A$ of the differential equation $f''+Af=0$ in the unit disc, 
which place all normal solutions $f$ to the union of Hardy spaces or result in 
the zero-sequence of each non-trivial solution to be uniformly separated. The conditions 
on the coefficient are given in terms of Carleson measures.
\end{abstract}

\maketitle



\section{Introduction} \label{sec:intro}

We consider solutions of the linear differential equation
\begin{equation} \label{eq:de2}
f''+A f=0
\end{equation}
in the unit disc $\D$ of the complex plane $\C$.
Recall that, if $f_1$ and $f_2$ are linearly independent solutions of \eqref{eq:de2}, then
the Schwarzian derivative
\begin{equation*}
S_w = \left( \frac{w''}{w'} \right)' - \frac{1}{2} \left(\frac{w''}{w'} \right)^2
\end{equation*}
of the quotient $w=f_1/f_2$ satisfies $S_w=2A$. 
One of our main objectives is to explore conditions on the coefficient $A$ placing
all solutions of \eqref{eq:de2} to the union of Hardy spaces, while the other aim is to study
the geometric zero distribution of non-trivial solutions $f\not\equiv 0$ of \eqref{eq:de2}.
In other words, we study restrictions of the Schwarzian derivative $S_w$ of
a locally univalent meromorphic function $w=f_1/f_2$ that place
$1/w'$ (which reduces to a~constant multiple of $f_2^2$) to the union of Hardy spaces,
and consider the geometric distribution of complex $a$-points of $w$ (which are precisely
the zeros of the solution $f_1-a f_2$ if $a\in\C$, and the zeros of $f_2$ if $a=\infty$).

We begin by recalling some notation. 
For $0<p<\infty$, the Hardy space $H^p$ consists of those
analytic functions in $\D$ for which
 \begin{equation*}
 \nm{f}_{H^p} = \lim_{r\to 1^-} \left( \frac{1}{2\pi} \int_0^{2\pi}
 \abs{f(r e^{i\theta})}^p \, d\theta \right)^{1/p}<\infty.
 \end{equation*}
A positive Borel measure $\mu$ on $\D$ is called a Carleson measure, if there exists a~positive constant 
$C$ such that
\begin{equation*}
\int_{\D} |f(z)|^p \, d\mu(z) \leq C \, \nm{f}_{H^p}^p, \quad f\in H^p.
\end{equation*}
These measures were characterized by Carleson as those positive measures $\mu$ for which
there exists a positive constant $K$ such that $\mu(Q) \leq K \ell(Q)$ for any
Carleson square $Q\subset\D$, or equivalently, as those positive measures $\mu$ for which
\begin{equation*}
\sup_{a\in\D} \int_\D \frac{1-\abs{a}^2}{|1-\overline{a}z|^2} \, d\mu(z) < \infty.
\end{equation*}
For more information see \cite{D:2000,G:2007}, for example.

The following spaces of analytic functions seem to be natural for the coefficient.
For $0\leq \alpha<\infty$, 
the growth space $H^\infty_\alpha$ contains those analytic functions $A$
in $\D$ for which
\begin{equation*}
\nm{A}_{H^\infty_\alpha} = \sup_{z\in\D} \, (1-\abs{z}^2)^\alpha \abs{A(z)} < \infty.
\end{equation*} 
For $0<p<\infty$, the analytic function $A$ in~$\D$
is said to belong to the space~$F^p$ if
$|A(z)|^p(1-|z|^2)^{2p-1}\, dm(z)$ is a Carleson measure, and we denote
\begin{equation*}
\nm{A}_{F^p} = \left( \, \sup_{a\in\D} \, \int_{\D} \abs{A(z)}^p
(1-\abs{z}^2)^{2p-2} ( 1 - \abs{\varphi_a(z)}^2)
     \, d m(z) \right)^{1/p}.
\end{equation*}
Here $\varphi_a(z) = (a-z)/(1-\overline{a} z)$ and $d m(z)$
denotes the element of the Lebesgue area measure on $\D$.
Note that $F^p\subsetneq H^\infty_2$ for any $0<p<\infty$ by subharmonicity. 
Correspondingly, the ``little-oh'' space
$F^p_0$ (the closure of polynomials in $F^p$), which
consists of those analytic functions in $\D$ for which
$|A(z)|^p(1-|z|^2)^{2p-1}\, dm(z)$ is a vanishing Carleson measure,
and the space $H^\infty_{\alpha,0}$, which contains those analytic functions
in $\D$ for which
\begin{equation*}
\lim_{|z|\to 1^-} \, (1-\abs{z}^2)^\alpha \abs{A(z)} = 0,
\end{equation*}
satisfy the inclusion $F^p_0\subsetneq H^\infty_{2,0}$ for any $0<p<\infty$.
It is known that:
\begin{itemize}
\item[\rm (i)] for each $0<p<\infty$ there exists a positive constant $\alpha = \alpha(p)$ 
such that, if $\nm{A}_{F^2} \leq \alpha$, then all solutions
$f$ of \eqref{eq:de2} belong to $H^p$, and each
non-trivial solution $f$ has at most one zero in $\D$;

\item[\rm (ii)] if $A\in F^2_0$, then all solutions $f$ of
\eqref{eq:de2} belong to $\bigcap_{0<p<\infty} H^p$, and each
non-trivial solution $f$ has at most finitely many zeros in $\D$.

\end{itemize}
The assertion (i) follows from
\cite[Theorem~1.7]{R:2007}; note that by choosing a sufficiently
small $0<\alpha<\infty$
we have $\nm{A}_{H^\infty_2}\leq 1$, and hence each solution $f$ of
\eqref{eq:de2} vanishes at most once in $\D$
\cite[Theorem~1]{N:1949}. The case (ii) 
is a consequence of \cite[p.~789]{R:2007}; in this case $A\in
H^\infty_{2,0}$, and hence all solutions have at most finitely
many zeros in $\D$ \cite[Theorem~1]{S:1955}. 
Notice also that
each non-trivial solution of \eqref{eq:de2} has at most finitely many zeros provided that $A\in F^2$ is lacunary,
since the lacunary series in $F^2$ and $F^2_{0}$ are same.


\section{Results} \label{subsec:uni}

Our results concern interrelationships of the properties
\begin{enumerate}
\item[\rm (i)]
all solutions of \eqref{eq:de2} belong to the union of Hardy spaces;

\item[\rm (ii)]
the zero-sequence of each non-trivial solution of 
\eqref{eq:de2} is uniformly separated;

\item[\rm (iii)]
the growth of the coefficient $A$.
\end{enumerate}

In the light of the following results it seems plausible that
$|A(z)|^2(1-|z|^2)^3 \, dm(z)$ being a~Carleson measure is sufficient for (i) and (ii).
The following theorems give many partial results in this direction.
For example, the existence of 
one non-vanishing solution allows us to reach this conclusion, see Corollary~\ref{cor:33}.

We begin with the geometric zero distribution of solutions of \eqref{eq:de2}.
Recall that the sequence $\{z_n\}_{n=1}^\infty\subset \D$
is called uniformly separated if
\begin{equation*}
\inf_{k\in\N} \, \prod_{n\in\N\setminus\{k\}} \left| \frac{z_n - z_k}{1-\overline{z}_n z_k} \right|>0,
\end{equation*}
while $\{z_n\}_{n=1}^\infty\subset \D$ is said to be separated in the hyperbolic metric if
there exists a~constant $\delta=\delta(\{z_n\}_{n=1}^\infty)>0$ such that
$\varrho_p(z_n,z_k) = |z_n-z_k|/|1-\overline{z}_n z_k|>\delta$ for all $n,k\in\N$ for which $n\neq k$.


\begin{theorem} \label{thmnew:2}
If $|A(z)|(1-|z|^2)\, dm(z)$ is a Carleson measure, 
then the zero-sequence of each non-trivial solution $f$ of \eqref{eq:de2} is uniformly separated.
\end{theorem}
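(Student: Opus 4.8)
The plan is to use Carleson's description of uniformly separated (equivalently, $H^\infty$-interpolating) sequences, see \cite{G:2007}: a sequence $\{z_n\}\subset\D$ is uniformly separated if and only if it is separated in the pseudohyperbolic metric $\varrho_p$ and the counting measure $\sum_n(1-|z_n|^2)\,\delta_{z_n}$ is a Carleson measure. I would establish these two properties in turn, exploiting throughout that the hypothesis that $|A(z)|(1-|z|^2)\,dm(z)$ is a Carleson measure is precisely $A\in F^1$, and that the $F^1$-norm is invariant under the Möbius automorphisms $\varphi_a$ of $\D$. Indeed, if $B=(A\circ\varphi_a)\,(\varphi_a')^2$ denotes the coefficient obtained after the change of variable $z=\varphi_a(\zeta)$ (under which $f''+Af=0$ transforms into an equation $g''+Bg=0$ whose solutions vanish exactly at the $\varphi_a$-preimages of the zeros of $f$), then $\nm{B}_{F^1}=\nm{A}_{F^1}$; in particular $\int_\D|B(\zeta)|(1-|\zeta|^2)\,dm(\zeta)\le\nm{A}_{F^1}$, and on any fixed disc $D(0,r)$ one has $\int_{D(0,r)}|B(\zeta)|\,dm(\zeta)\le\nm{A}_{F^1}/(1-r^2)$.

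For the separation, fix a zero $z_0$ of $f$ and normalize it to the origin by the above change of variable, obtaining $g''+Bg=0$ with $g(0)=0$ and $\nm{B}_{F^1}=\nm{A}_{F^1}$. If $\zeta_1$ were another zero with $|\zeta_1|<r/2$, I would restrict to the segment $[0,\zeta_1]$: the function $h(t)=g(t\zeta_1)$ solves the one-dimensional equation $h''+\zeta_1^2 B(t\zeta_1)\,h=0$ on $[0,1]$ with $h(0)=h(1)=0$ and $h\not\equiv0$. The Green's-function proof of the Lyapunov inequality applies verbatim to complex-valued coefficients and yields $|\zeta_1|^2\int_0^1|B(t\zeta_1)|\,dt\ge4$, that is, $|\zeta_1|\int_{[0,\zeta_1]}|B|\,|ds|\ge4$. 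Since $|B|$ is subharmonic, the sub-mean value property lets me dominate this line integral by the area integral $\int_{D(0,r)}|B|\,dm$, which is bounded by $\nm{A}_{F^1}/(1-r^2)$. Combining the two estimates forces $|\zeta_1|$ to exceed a positive constant depending only on $\nm{A}_{F^1}$ (and $r$); transporting back by $\varphi_{z_0}$ gives uniform pseudohyperbolic separation of the zeros.

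For the counting measure, conformal invariance reduces matters to a single uniform estimate: it suffices to bound $\sum_n(1-|\zeta_n|^2)$ by a constant depending only on $\nm{A}_{F^1}$, where $\{\zeta_n\}$ are the zeros of an arbitrary solution $g$ of $g''+Bg=0$ with $\nm{B}_{F^1}=\nm{A}_{F^1}$; applying this to every Möbius normalization yields exactly $\sup_{a\in\D}\sum_n(1-|\varphi_a(z_n)|^2)<\infty$, i.e.\ the Carleson measure condition. To obtain such a bound I would invoke Jensen's formula, which expresses $\sum_n\log(1/|\zeta_n|)$ — comparable to $\sum_n(1-|\zeta_n|^2)$ — through the Nevanlinna characteristic $\sup_{0<r<1}\frac{1}{2\pi}\int_0^{2\pi}\log^+|g(re^{i\theta})|\,d\theta$, and then control this characteristic by growth estimates for solutions of $g''+Bg=0$ obtained by integrating the equation together with the global bound $\int_\D|B|(1-|\zeta|^2)\,dm\le\nm{A}_{F^1}$.

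I expect this last step to be the main obstacle. The separation argument is essentially local and self-contained, but the Carleson counting is genuinely global: pseudohyperbolic separation alone permits the weighted sum $\sum_n(1-|z_n|^2)$ to diverge across dyadic ``depth'' layers, so one must extract from the Carleson structure of $A$ a conformally uniform growth bound for the solutions. The naive pointwise estimate coming from $F^1\subset H^\infty_2$ is too weak here, and the difficulty is to exploit the averaged quantity $\int_\D|B|(1-|\zeta|^2)\,dm$ — rather than $\sup_z|B(z)|(1-|z|^2)^2$ — in order to place $g$ in the Nevanlinna class with characteristic bounded solely in terms of $\nm{A}_{F^1}$.
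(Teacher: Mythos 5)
Your overall architecture coincides with the paper's: both arguments rest on the M\"obius invariance of the $F^1$-condition, a uniform bound on the Nevanlinna characteristic of the transplanted solutions $g_a$ in terms of $\int_\D|B|(1-|\zeta|^2)\,dm$, and Jensen's formula to convert that bound into a bound on Blaschke-type sums. However, the step you single out as ``the main obstacle'' is exactly the point where the paper does no new work: the required estimate is \cite[Theorem~4.5]{H:2000} (Theorem~\ref{th:jh} above), which states that $\int_\D|B(\zeta)|(1-|\zeta|^2)\,dm(\zeta)<\infty$ places every solution of $g''+Bg=0$ in the Nevanlinna class with $m(r,g)\le\log^+\big(|g(0)|+|g'(0)|\big)+K\int_{D(0,r)}|B(\zeta)|(1-|\zeta|^2)\,dm(\zeta)$ for an absolute constant $K$; it is proved by integrating the equation twice and applying Gronwall's lemma. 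So your proposal is not wrong in conception, but as written it stops short of a proof precisely at its load-bearing step; you must either import this result or reproduce the Gronwall argument.

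Two further points. First, verifying the Carleson condition at \emph{every} M\"obius normalization runs into a normalization problem: Jensen's formula at a generic $a\in\D$ produces the term $-\log|g_a(0)|$, which is unbounded as $a$ approaches (without reaching) a zero of $f$. The paper avoids this by taking $a$ equal to the zeros $z_k$ themselves, where $g_{z_k}(0)=0$, the zero at the origin is simple, and one may normalize $g_{z_k}'(0)=1$, so that Jensen applied to $z\mapsto z^{-1}g_{z_k}(z)$ gives $\sup_k\sum_{n\ne k}\big(1-|\varphi_{z_k}(z_n)|\big)\le K\nm{A}_{F^1}$ with no extraneous terms; combined with separation and the inequality $-\log x\le x^{-1}(1-x)$ this yields uniform separation directly, without invoking Carleson's interpolation theorem at all. (Given separation, restricting the supremum to $a\in\{z_k\}$ also suffices for the Carleson condition, so your route can be repaired in the same way.) Second, your Lyapunov-inequality argument for the pseudohyperbolic separation is a valid, self-contained alternative: the paper instead notes $F^1\subset H^\infty_2$ by subharmonicity and cites \cite[Theorem~3]{S:1955}. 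Both give a separation constant depending only on $\nm{A}_{F^1}$, and yours has the merit of using only the hypothesis actually assumed.
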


Theorem~\ref{thmnew:2} should be compared with \cite[Theorem~3]{S:1955} which states that,
if $A\in H^\infty_2$, then the zero-sequence of each non-trivial solution $f$ of \eqref{eq:de2} is separated in the hyperbolic
metric by a constant depending on $\nm{A}_{H^\infty_2}$. For the interplay between the maximal
growth of the coefficient $A$ and the minimal separation of the zeros of non-trivial solutions $f$ of \eqref{eq:de2},
we refer to \cite{CGHR:2013}.

The proof of Theorem~\ref{thmnew:2} relies on Theorem~\ref{th:jh} below, 
according to which all solutions $f$ of \eqref{eq:de2} belong to the Nevanlinna class $N$, that is
\begin{equation} \label{eq:prox}
\sup_{0\leq r < 1}  m(r,f) = \sup_{0\leq r < 1} \, \frac{1}{2\pi} \int_0^{2\pi} \log^+ |f(re^{i\theta})| \, d\theta < \infty,
\end{equation}
if
\begin{equation} \label{eq:Acond}
\int_{\D} |A(z)| (1-|z|^2) \, dm(z)<\infty.
\end{equation}
Condition \eqref{eq:Acond} is, of course, satisfied by the hypothesis of Theorem~\ref{thmnew:2}.


\subsection{Differential equations with one non-vanishing solution} \label{sec:zerofree}

The following result based on \cite[Corollary~7]{BJ:1994} introduces a factorization of solutions of \eqref{eq:de2},
which does not have an~apparent counterpart in general.


\begin{theorem} \label{thm:fact}
Let $A$ be an analytic function in $\D$, and suppose that \eqref{eq:de2} admits
a~non-vanishing solution $g$.

\begin{enumerate}
\item[\rm (i)]
If $|A(z)|^2(1-|z|^2)^3 \, dm(z)$ is a Carleson measure, then 
all non-trivial solutions $f$ of \eqref{eq:de2} can be factorized as $f=g W$, where
$\log g \in\BMOA$, and
either $\log W'\in\BMOA$ (if $f$ is linearly independent
to $g$) or $W$ is a non-zero complex constant (if $f$ is linearly dependent to $g$).

\item[\rm (ii)]
If $A\in H^\infty_2$, then 
all non-trivial solutions $f$ of \eqref{eq:de2} can be factorized as $f=g W$, where
$\log g \in\BLOCH$, and
either $\log W'\in\BLOCH$ (if $f$ is linearly independent
to $g$) or $W$ is a non-zero complex constant (if $f$ is linearly dependent to $g$).
\end{enumerate}
\end{theorem}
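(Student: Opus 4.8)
The plan is to collapse the whole problem onto a single scalar function by reduction of order, and then to read off the two membership statements from the size and oscillation of its logarithmic derivative. Since $g$ is non-vanishing on the simply connected disc $\D$, both $\log g$ and $\log W'$ below are single-valued analytic functions.

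First I would seek the general solution in the form $f=gW$. Substituting into \eqref{eq:de2} and using $g''+Ag=0$ collapses the equation to $2g'W'+gW''=0$, whence $W''/W'=-2g'/g$ and therefore $W'=c\,g^{-2}$ for some $c\in\C$. If $f$ is linearly dependent to $g$ then $c=0$ and $W$ is a non-zero constant, which is the trivial alternative in both (i) and (ii); so assume $c\neq0$. Writing $b=(\log g)'=g'/g$ and taking logarithms gives
\begin{equation*}
\log W' = \log c - 2\log g,\qquad \frac{W''}{W'}=-2b,
\end{equation*}
so $\log W'$ lies in $\BMOA$ (resp. $\BLOCH$) if and only if $\log g$ does, and both reduce to one condition on $b$: membership of $\log g$ in $\BMOA$ is equivalent to $\abs{b(z)}^2(1-\abs{z}^2)\,dm(z)$ being a Carleson measure, while membership in $\BLOCH$ is equivalent to $\sup_{z\in\D}\abs{b(z)}(1-\abs{z}^2)<\infty$. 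Differentiating $b=g'/g$ and using $g''=-Ag$ shows that $b$ solves the Riccati equation $b'+b^2+A=0$, equivalently the Schwarzian identity $S_W=2A$ holds. Thus both parts amount to transferring a size/Carleson bound on $A$ to $b$ through this Riccati relation.

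For part (ii) I would normalize and then run a normal-families argument. Conjugating \eqref{eq:de2} by the automorphism $\varphi_a$ and dividing by $(\varphi_a')^{1/2}$ turns it into $F''+BF=0$ with $B=(A\circ\varphi_a)(\varphi_a')^2$, which leaves $\nm{A}_{H^\infty_2}$ invariant and carries $g$ to a non-vanishing solution $G$; a short computation gives $b(a)(1-\abs{a}^2)=-\overline a-G'(0)/G(0)$. It therefore suffices to bound $\abs{G'(0)/G(0)}$ by a constant depending only on $\nm{A}_{H^\infty_2}$, uniformly over all coefficients $B$ with $\nm{B}_{H^\infty_2}\le\nm{A}_{H^\infty_2}$ that admit a non-vanishing solution. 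If this failed there would be coefficients $B_n$ (locally bounded, hence a normal family by the $H^\infty_2$ bound) with non-vanishing solutions $G_n$, $G_n(0)=1$, and $\abs{G_n'(0)}\to\infty$; passing to $G_n/G_n'(0)$ and using continuous dependence of solutions on coefficient and data, the limit would be a non-trivial solution vanishing at $0$ yet arising as a locally uniform limit of non-vanishing functions, contradicting Hurwitz's theorem. This yields $\log g\in\BLOCH$, hence $\log W'\in\BLOCH$.

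For part (i) the same normalization reduces the Carleson bound on $\abs{b}^2(1-\abs{z}^2)\,dm$ to a uniform estimate $\int_\D\abs{(\log G)'}^2(1-\abs{\zeta}^2)\,dm(\zeta)\le C(\nm{A}_{F^2})$ over the normalized equations, which by Littlewood--Paley is a uniform $L^2$ bound on $\log\abs{G}$ over circles; through the relation $\log\abs{g}=-\tfrac12\log\abs{W'}+\mathrm{const}$ this is an $L^2$/Carleson estimate for the pre-Schwarzian of $W$ driven by the Carleson condition on the Schwarzian $S_W=2A$, which is the content of \cite[Corollary~7]{BJ:1994}. A tempting shortcut is to combine the Bloch bound $M=\sup_{z}\abs{b(z)}(1-\abs{z}^2)$ from part (ii) with the characterization $\log g\in\BMOA\iff\abs{(\log g)''}^2(1-\abs{z}^2)^3\,dm$ Carleson and the identity $(\log g)''=-A-b^2$, reducing matters to $\abs{b}^4(1-\abs{z}^2)^3\,dm$ being Carleson; here $\abs{b}^4(1-\abs{z}^2)^3\le M^2\,\abs{b}^2(1-\abs{z}^2)$, but this is circular and closes only for small $M$. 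The main obstacle is exactly this: the naive passage $\abs{b}^2=\abs{b'+A}\le\abs{b'}+\abs{A}$ discards the analytic cancellation and falls one power of $(1-\abs{z}^2)$ short of the Carleson condition on $A$, so part (i) genuinely needs the cancellation built into the Bishop--Jones $L^2$ estimate for the Schwarzian rather than pointwise size bounds.
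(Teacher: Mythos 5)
Your proposal is correct in substance and its skeleton coincides with the paper's: reduction of order gives $W'=c\,g^{-2}$, so $\log W'$ and $\log g$ differ by an affine change and the whole theorem reduces to showing that the logarithm of a non-vanishing solution lies in $\BMOA$ (resp.\ $\BLOCH$). Where you genuinely diverge is part (ii): the paper simply quotes Yamashita's theorem ($S_w\in H^\infty_2\Rightarrow w''/w'\in H^\infty_1$ for the locally univalent $w$ with $w'=g^{-2}$), whereas you give a self-contained normal-families argument — conjugate by $\varphi_a$, note the M\"obius invariance of $\nm{\cdot}_{H^\infty_2}$, and rule out $|G'(0)/G(0)|\to\infty$ by passing to $G_n/G_n'(0)$ and invoking continuous dependence plus Hurwitz. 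That argument is sound (the coefficients form a normal family, the renormalized solutions converge locally uniformly to a non-trivial solution vanishing at the origin, contradicting Hurwitz) and is a nice elementary substitute for the citation; what it costs is that it is purely qualitative, while Yamashita gives the explicit bound $6$ that the paper reuses elsewhere. For part (i) you land on exactly the paper's tool, \cite[Corollary~7]{BJ:1994} applied to the M\"obius-composed $\log w'$, and your closing diagnosis of why pointwise estimates on $b^2=-(b'+A)$ lose a power of $(1-|z|^2)$ is precisely the right reason the Bishop--Jones $L^2$ cancellation is unavoidable. The one step you should make explicit in (i): Corollary~7 produces, besides the Schwarzian term $\nm{S_w}_{F^2}^2$, the terms $|h_\zeta'(0)|^2$ and a cross term involving $(w''/w')(\varphi_\zeta)\varphi_\zeta''$, and both are controlled only because $w''/w'\in H^\infty_1$ — i.e.\ because part (ii) applies, which it does since $F^2\subset H^\infty_2$. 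You have that bound in hand from your normal-families argument, but your sketch mentions it only inside the rejected ``shortcut''; it must be invoked in the main line of (i) as well, exactly as the paper does via Yamashita when bounding \eqref{eq:j1} and \eqref{eq:j3}.
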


Recall that $\BMOA$ consists those $f\in H^2$ for which
$|f'(z)|^2(1-|z|^2) \, dm(z)$ is a~Carleson measure, and $\BMOA$ has the~seminorm
\begin{equation*}
 \nm{f}_{\BMOA} =  \left( \sup_{a\in\D} \int_\D |f'(z)|^2 ( 1 - |\vp_a(z)|^2) \, dm(z)<\infty \right)^{1/2}.
 \end{equation*}
The Bloch space $\BLOCH$
contains those analytic functions $f$ in $\D$ for which $f'\in H^\infty_1$.


\begin{corollary} \label{cor:33}
Let $A$ be an~analytic function in $\D$, and suppose that \eqref{eq:de2} admits
a~non-vanishing solution.
If $|A(z)|^2(1-|z|^2)^3 \, dm(z)$ is a Carleson measure, 
then all solutions of \eqref{eq:de2} belong to $H^p$ for some $0<p<\infty$, and
the zero-sequence of each non-trivial solution $f$ of \eqref{eq:de2} is uniformly separated.
\end{corollary}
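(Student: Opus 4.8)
The plan is to derive everything from the factorization in Theorem~\ref{thm:fact}(i). Fix the non-vanishing solution $g$, so that $b:=\log g\in\BMOA$, and write an arbitrary non-trivial solution as $f=gW$, where either $W$ is a nonzero constant (the linearly dependent case, which is then trivial) or $\log W'\in\BMOA$ (the linearly independent case). Since $g$ is zero-free, the zeros of $f$ coincide with the zeros of $W$; and since every solution has the form $g(\alpha+\beta W)$, its zeros are the $a$-points $\{W=-\alpha/\beta\}$ of the single function $W$. Thus both conclusions reduce to understanding $g$ and the $a$-points of $W$.

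For the membership in $H^p$ I would use the exponential integrability of $\BMOA$ functions (John--Nirenberg, see \cite{G:2007}): since $\log g\in\BMOA$, both $g=e^{\log g}$ and $1/g=e^{-\log g}$ lie in $H^{s}$ for all sufficiently small $s>0$. The Wronskian identity gives $W'=c/g^2=c\,(1/g)^2$ for a nonzero constant $c$, so $W'\in H^{s_1}$ for some $s_1>0$; integrating and invoking the Hardy--Littlewood theorem on primitives of Hardy functions \cite{D:2000} puts $W\in H^{s_2}$ for some $s_2>0$. Hölder's inequality then gives $f=gW\in H^{p}$ with $1/p=1/s+1/s_2$. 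As the solution space is spanned by $g$ and one independent solution, both lying in a common $H^{p}$, and as $H^{p}$ is a linear space, all solutions lie in that $H^{p}$.

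For the uniform separation I would argue in two steps. Hyperbolic separation is free: $F^2\subset H^\infty_2$, so $A\in H^\infty_2$, and \cite[Theorem~3]{S:1955} gives that the zeros of each non-trivial solution are separated in the hyperbolic metric. It then remains to prove the Carleson condition $\sum_{z_n\in Q}(1-|z_n|)\le C\,\ell(Q)$ for the zero-sequence $\{z_n\}$ and every Carleson square $Q$; together with separation this is equivalent to the sequence being uniformly separated (an interpolating sequence), see \cite{G:2007}. I would obtain this from the factorization by working with $W$ directly: since $\log W'\in\BMOA\subset\BLOCH$, the function $W$ is uniformly locally univalent, so on a fixed hyperbolic disc about each zero one has $W(z)\approx W'(z_n)(z-z_n)$, while the global size of $W$ is constrained by $W\in H^{s_2}$. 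Estimating the Blaschke product $B=\prod_n\varphi_{z_n}$ through $(1-|z_n|^2)|B'(z_n)|=\prod_{k\neq n}|\varphi_{z_k}(z_n)|$ and the local linearization should then yield a uniform interpolation constant.

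The main obstacle is precisely this last Carleson/interpolation estimate for the zeros of $W$. One is tempted to instead verify the hypothesis of Theorem~\ref{thmnew:2}, that $|A(z)|(1-|z|^2)\,dm(z)$ is a Carleson measure; but this route is unavailable, since $F^2\not\subset F^1$. Indeed, taking $b=\log g$ to be a lacunary series with $\BMOA$ coefficients of borderline size, one checks that $A=-(b''+(b')^2)$ lies in $F^2$ while $|A(z)|(1-|z|^2)\,dm(z)$ is not even a finite measure (the $b''$ term alone contributes infinite mass), so that even \eqref{eq:Acond} may fail. This is exactly why Theorem~\ref{thm:fact} is needed: the separation must be read off from $W$, equivalently from $\log W'\in\BMOA$, rather than from a size condition on $A$, and turning the heuristic ``$W$ is linear near its zeros while $H^{s_2}$ bounds its valence'' into a uniform estimate over all Carleson squares is the technical heart of the argument.
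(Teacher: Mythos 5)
Your first half (membership in a Hardy space) is correct and is essentially the paper's argument: both proofs factor $f=gW$ via Theorem~\ref{thm:fact}(i), exponentiate the $\BMOA$ functions $\log g$ and $\log W'$ into Hardy spaces (the paper cites Cima--Schober \cite{CS:1976}, which is the John--Nirenberg exponential integrability you invoke), pass from $W'$ to $W$ by \cite[Theorem~5.12]{D:2000}, and multiply. Your observation that the zeros of every non-trivial solution are the $a$-points of the single function $W$ is also exactly the reduction the paper makes, and your remark that one cannot route through Theorem~\ref{thmnew:2} because $F^2\not\subset F^1$ correctly identifies why the factorization is indispensable.

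The second half, however, has a genuine gap, and you say so yourself: the Carleson/interpolation estimate for the zero-sequence of $W$ is precisely what must be proved, and ``$W$ is approximately linear near its zeros while $W\in H^{s_2}$ bounds its valence'' is not an argument --- the local linearization controls nothing at the scale of a Carleson square, and the $H^{s_2}$ bound alone does not control the counting measure of the $a$-points. The paper closes this gap with Lemma~\ref{lemma:prop1}, whose mechanism you should compare with your sketch: for each zero $z_n$ of $W$ one forms the normalization $h_{z_n}(z)=-W(\varphi_{z_n}(z))/\bigl(W'(z_n)(1-|z_n|^2)\bigr)$, so that $h_{z_n}(0)=0$ and $h_{z_n}'(0)=1$; the M\"obius invariance of the $\BMOA$ seminorm (up to the harmless $\varphi_{z_n}''/\varphi_{z_n}'$ term) gives $\sup_n\|\log h_{z_n}'\|_{\BMOA}\lesssim \|\log W'\|_{\BMOA}+1$, whence $\sup_n\|h_{z_n}\|_{H^p}\lesssim 1$ for some small $p$ by the same Cima--Schober plus Hardy--Littlewood step you already used; Jensen's formula applied to $z\mapsto z^{-1}h_{z_n}(z)$ then bounds the Blaschke sum $\sum_{m\neq n}\bigl(1-|\varphi_{z_n}(z_m)|\bigr)$ uniformly in $n$, and combining this with the hyperbolic separation you correctly extracted from \cite[Theorem~3]{S:1955} (via the elementary inequality $-\log x\leq x^{-1}(1-x)$) yields uniform separation directly, with no need for the separated-plus-Carleson-measure characterization. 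The key idea your proposal is missing is thus the uniform renormalization at each zero, which converts the global $\BMOA$ hypothesis into a uniform $H^p$ bound for the whole family $\{h_{z_n}\}$ and lets Jensen's formula do the counting.
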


The statement corresponding to Corollary~\ref{cor:33} in the case of $A\in H^\infty_2$ is true
without the assumption of existence of a non-vanishing solution:
if $A\in H^\infty_2$, 
then all solutions of \eqref{eq:de2} belong to $H^\infty_p$ for some $p=p(\nm{A}_{H^\infty_2})>0$
\cite[Example~1]{P:1982}, and
the zero-sequence of each non-trivial solution $f$ of \eqref{eq:de2} is separated in the hyperbolic
metric by a~positive constant depending on $\nm{A}_{H^\infty_2}$ \cite[Theorem~3]{S:1955}.

The following result gives a complete description of the zero-free solutions
in our setting.


\begin{theorem} \label{thm:cara}
Let $A$ be an analytic function in $\D$.
\begin{enumerate}
\item[\rm (i)]
If $|A(z)|^2(1-|z|^2)^3\, dm(z)$ is a~Carleson measure, then all non-vanishing solutions $f$
of \eqref{eq:de2} satisfy $\log f\in\BMOA$. Conversely, if \eqref{eq:de2} admits
a zero-free solution $f$ satisfying $\log f\in\BMOA$, then $|A(z)|^2(1-|z|^2)^3\, dm(z)$ 
is a~Carleson measure.

\item[\rm (ii)]
If $A\in H^\infty_2$, then all non-vanishing solutions $f$
of \eqref{eq:de2} satisfy $\log f\in\BLOCH$. Conversely, if \eqref{eq:de2} admits
a zero-free solution $f$ satisfying $\log f\in\BLOCH$, then $A\in H^\infty_2$.
\end{enumerate}
\end{theorem}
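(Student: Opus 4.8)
The plan is to reduce both parts to a single Riccati identity for the logarithmic derivative of a zero-free solution, and then to read off the four implications. If $f$ is a zero-free solution of \eqref{eq:de2}, then $g=\log f$ is a well-defined analytic branch on the simply connected domain $\D$, and substituting $f=e^{g}$ into \eqref{eq:de2} gives
\begin{equation*}
g''+(g')^2+A=0, \qquad \text{that is,} \qquad A=-(\log f)''-\big((\log f)'\big)^2.
\end{equation*}
Writing $B=(\log f)'=f'/f$, this reads $A=-B'-B^2$, and the entire proof amounts to transferring growth information between $A$ and $B$ through this identity.

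The forward implications in (i) and (ii) require no new work: they are exactly the content of Theorem~\ref{thm:fact} applied with the given zero-free solution in the role of the non-vanishing solution $g$. Indeed, if $|A(z)|^2(1-|z|^2)^3\,dm(z)$ is a Carleson measure and $f$ is a non-vanishing solution, then Theorem~\ref{thm:fact}(i) with $g=f$ yields $\log f\in\BMOA$; likewise, if $A\in H^\infty_2$, then Theorem~\ref{thm:fact}(ii) gives $\log f\in\BLOCH$.

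For the converse in (ii) I would argue directly from $A=-B'-B^2$. Since $\log f\in\BLOCH$ means $B=(\log f)'\in H^\infty_1$, we have $(1-|z|^2)|B(z)|\le C$, whence $(1-|z|^2)^2|B(z)|^2\le C^2$. Moreover, the standard description of the Bloch space by second derivatives gives $(1-|z|^2)^2|(\log f)''(z)|=(1-|z|^2)^2|B'(z)|\le C'$, the direction needed here following from a Cauchy estimate for $B'$ in terms of the values of $B$ on $D(z,(1-|z|)/2)$. Combining, $(1-|z|^2)^2|A(z)|\le(1-|z|^2)^2|B'(z)|+\big((1-|z|^2)|B(z)|\big)^2$ is bounded, i.e.\ $A\in H^\infty_2$.

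The converse in (i) carries the real content. From $A=-B'-B^2$ and $|A|^2\le 2|B'|^2+2|B|^4$, it suffices to show that each of $|B'(z)|^2(1-|z|^2)^3\,dm(z)$ and $|B(z)|^4(1-|z|^2)^3\,dm(z)$ is a Carleson measure. The second is immediate: $\log f\in\BMOA\subset\BLOCH$ gives $(1-|z|^2)|B(z)|\le C$, so
\begin{equation*}
|B(z)|^4(1-|z|^2)^3=\big((1-|z|^2)|B(z)|\big)^2\,|B(z)|^2(1-|z|^2)\le C^2\,|B(z)|^2(1-|z|^2),
\end{equation*}
and $|B(z)|^2(1-|z|^2)\,dm(z)$ is a Carleson measure by the very definition of $\BMOA$. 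The first term is where the main obstacle lies: one must know that $\log f\in\BMOA$ forces $|(\log f)''(z)|^2(1-|z|^2)^3\,dm(z)=|B'(z)|^2(1-|z|^2)^3\,dm(z)$ to be a Carleson measure, i.e.\ the characterization of $\BMOA$ by second-order derivatives. The direction we need is the elementary one: a Cauchy estimate for $B'$ combined with subharmonicity of $|B|^2$ yields $|B'(z)|^2(1-|z|^2)^3\lesssim (1-|z|)^{-1}\int_{D(z,(1-|z|)/2)}|B(w)|^2\,dm(w)$, and a Fubini argument over a Carleson square, using that $1-|z|\sim 1-|w|$ whenever $w\in D(z,(1-|z|)/2)$, reduces the claimed Carleson bound to that of $|B(z)|^2(1-|z|^2)\,dm(z)$, which holds since $\log f\in\BMOA$. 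Summing the two contributions gives $A\in F^2$, completing the converse.
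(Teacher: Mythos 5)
Your proposal is correct and follows essentially the same route as the paper: the forward implications are delegated to Theorem~\ref{thm:fact} exactly as the paper does, and the converses rest on the identity $A=-(f'/f)'-(f'/f)^2$, which is precisely the paper's $S_w=2A$ with $w''/w'=-2f'/f$ written out (the paper's converse of (ii) is verbatim this identity). Your spelled-out treatment of the two terms $|B'|^2(1-|z|^2)^3$ and $|B|^4(1-|z|^2)^3$ is exactly the ``standard estimates'' the paper invokes without detail, so there is nothing to add.
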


In the case of Theorem~\ref{thm:cara}(i) all non-vanishing
solutions $f$ of \eqref{eq:de2} are in fact outer functions in \emph{Hardy spaces}, see \cite[Corollary~3, p.~34]{D:2000}.
Note that this property restricts not only the growth of non-vanishing solutions but also the rate at which
they may decay to zero.
Correspondingly, if $f$ is a zero-free solution of \eqref{eq:de2} with $A\in H^\infty_2$,
then Theorem~\ref{thm:cara}(ii) implies that there exists a~constant $p=p(\nm{A}_{H^\infty_2})$ with $0<p<\infty$ such that
$(1-|z|)^p\lesssim|f(z)|\lesssim (1-|z|)^{-p}$ for all $z\in\D$.
We employ the notation $a\asymp b$, which is equivalent to the
conditions $a\lesssim b$ and $b\lesssim a$, where the former means
that there exists a constant $C>0$ such that $a \leq C b$, and the
latter is defined analogously.

For example, the argument above asserts that 
the singular inner function $$f(z) = \exp\big(-(1+z)/(1-z)\big), \quad z\in\D,$$
cannot be a solution of \eqref{eq:de2} with $A\in H^\infty_2$.
This is also easily verified by a direct computation, since in this case 
$A(z)=-4z(1-z)^{-4}$ by \eqref{eq:de2}. In particular, the boundedness
of one solution of \eqref{eq:de2} is not sufficient to guarantee that $A\in H^\infty_2$.

One of the main tools concerning the results in Section~\ref{sec:zerofree} is 
\cite[Corollary~7]{BJ:1994}, which
also induces a~growth estimate for the non-vanishing solutions of \eqref{eq:de2}.


\begin{proposition} \label{prop:bjest}
Let $A$ be analytic in $\D$.
If $f$ is a non-vanishing solution of \eqref{eq:de2}, then
\begin{equation*}
\frac{1}{2\pi} \int_0^{2\pi} \left| \log \frac{f(re^{i\theta})}{f(0)} \right|^2  d\theta
  \lesssim r^2 \left| \frac{f'(0)}{f(0)} \right|^2 + r^2 \int_{D(0,r)} |A(z)|^2 (1-|z|^2)^3 \, dm(z)
\end{equation*}
for all $0<r<1$.
\end{proposition}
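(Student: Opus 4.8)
The plan is to pass to the logarithm of the solution and then to reduce the mean in question to a weighted area integral via a Littlewood--Paley identity. Since $f$ is non-vanishing, the function $h:=\log(f/f(0))$ is a well-defined single-valued analytic function on $\D$ with $h(0)=0$ and $h'=f'/f$; differentiating $h$ twice and using \eqref{eq:de2} turns the linear equation into the Riccati-type identity $h''+(h')^2+A=0$. Thus the quantity to be estimated is $\frac{1}{2\pi}\int_0^{2\pi}|h(re^{i\theta})|^2\,d\theta$, and the two terms on the right-hand side should correspond, respectively, to the linear part $h'(0)\,z=\tfrac{f'(0)}{f(0)}z$ of $h$ and to the contribution of the coefficient $A$.

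First I would record the Littlewood--Paley identity
\begin{equation*}
\frac{1}{2\pi}\int_0^{2\pi}|h(re^{i\theta})|^2\,d\theta=\frac{2}{\pi}\int_{D(0,r)}\left|\frac{f'(z)}{f(z)}\right|^2\log\frac{r}{|z|}\,dm(z),
\end{equation*}
valid because $h(0)=0$; this reduces the assertion to bounding the area integral of $|f'/f|^2$ against the Green weight $\log(r/|z|)$. The natural temptation is to apply the same identity to $u:=f'/f$ and to substitute $u'=-u^2-A$, but this produces the term $\int_{D(0,r)}|u|^4\log(r/|z|)\,dm$, which cannot be reabsorbed into $\int_{D(0,r)}|u|^2\log(r/|z|)\,dm$. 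The nonlinearity $(f'/f)^2$ of the Riccati equation is therefore the main obstacle, and overcoming it is precisely the point at which the global hypothesis---that $f$ has no zeros, so that $\log f$ is single-valued on all of $\D$---must enter.

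To exploit this I would invoke \cite[Corollary~7]{BJ:1994}, which furnishes for non-vanishing solutions of \eqref{eq:de2} the control of the logarithmic derivative $f'/f$ in terms of the initial datum $f'(0)/f(0)$ and of $A$ that the energy method fails to provide. Feeding this estimate into the area integral above and splitting off the linear contribution $h'(0)z$ (which yields the term $r^2|f'(0)/f(0)|^2$), the remaining step is a weighted Hardy-type estimate together with Fubini's theorem: one compares the Green weight $\log(r/|z|)$ with the weight $(1-|z|^2)^3$, the radial integration supplying the factor $r^2$, so as to arrive at $r^2\int_{D(0,r)}|A(z)|^2(1-|z|^2)^3\,dm(z)$. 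I expect the genuinely delicate point to be not these weighted computations but the passage through \cite[Corollary~7]{BJ:1994}; everything downstream of it should be routine once the logarithmic derivative is under control.
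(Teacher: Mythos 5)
You have correctly located both the obstacle (the Riccati nonlinearity $(f'/f)^2$) and the external tool (\cite[Corollary~7]{BJ:1994}), but the proposal stops exactly where the one essential idea is needed, and the route you sketch around that point would not work. The Bishop--Jones corollary is not a bound on the logarithmic derivative $f'/f$ to be ``fed into'' a Littlewood--Paley area integral; it is a direct $H^2$ estimate of the form
\begin{equation*}
\nm{\varphi-\varphi(0)}_{H^2}^2\;\lesssim\;|\varphi'(0)|^2+\int_{\D}\left|\varphi''(z)-\tfrac12\varphi'(z)^2\right|^2(1-|z|^2)^3\,dm(z).
\end{equation*}
If you apply it to your $h=\log(f/f(0))$, then $h''-\tfrac12(h')^2=-A-\tfrac32(f'/f)^2$, so the quartic term you yourself flagged as the main obstacle reappears on the right-hand side and nothing has been gained.

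The missing idea is the choice of $\varphi$. Take a second solution $g$ with $W(f,g)=1$, set $w=g/f$, so that $w'=1/f^2$ and $S_w=2A$, and apply the corollary to $\varphi(z)=\log\bigl(w'(rz)\,r\bigr)=-2\log f(rz)+\mathrm{const}$. Precisely because of the factor $-2$, the nonlinear terms cancel identically and $\varphi''-\tfrac12(\varphi')^2=r^2S_w(rz)=2r^2A(rz)$; the corollary then gives the asserted inequality after the change of variable $\zeta=rz$, which is also where the factors $r^2$ and the restriction to $D(0,r)$ come from (using $(1-|\zeta|^2/r^2)^3\le(1-|\zeta|^2)^3$). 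Your Littlewood--Paley reduction and the proposed Fubini comparison of $\log(r/|z|)$ with $(1-|z|^2)^3$ are not needed and would not produce that weight, which comes packaged with the corollary itself. Without the substitution $\varphi=-2\log f$ (equivalently, $\log w'$ for the quotient $w=g/f$), the argument has a genuine gap.
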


By Proposition~\ref{prop:bjest} and \cite[Corollary~3, p.~34]{D:2000} all non-vanishing
solutions of \eqref{eq:de2} are outer functions in the \emph{Nevanlinna class} provided that
\begin{equation} \label{eq:conj}
\int_{\D} |A(z)|^2 (1-|z|^2)^3 \, dm(z)<\infty.
\end{equation}
It would be desirable to show that \eqref{eq:conj} (we may also assume that $A\in H^\infty_2$)
is sufficient to place all solutions of \eqref{eq:de2} in $N$. This would 
improve the results in the
literature (under the additional condition $A\in H^\infty_2$), since
\begin{equation} \label{eq:compgrests}
    \begin{split}
    \int_{\D} |A(z)|^2 (1-|z|^2)^3\, dm(z)
    &\le\|A\|_{H^\infty_2} \int_{\D} |A(z)|(1-|z|^2)\, dm(z)\\
    &\le \|A\|_{H^\infty_2}^{3/2}  \int_{\D} |A(z)|^{1/2}\, dm(z).
    \end{split}
    \end{equation}
The last integral in \eqref{eq:compgrests} appears in the growth estimate \cite[Theorem~5]{P:1982}, which is obtained by
Herold's comparison theorem, while the intermediate integral shows up in
Theorem~\ref{th:jh}, which is proved by integrating \eqref{eq:de2}
and applying the Gronwall lemma. 

We close our discussion on non-vanishing solutions by the
following result, which shows that there are differential equations \eqref{eq:de2} in $\D$
having no zero-free solutions. The proof of Theorem~\ref{thm:roth} was constructed jointly with
Professor O.~Roth.


\begin{theorem}\label{thm:roth}
There exists a locally univalent meromorphic function in $\D$, which maps $\D$
onto the extended complex plane.
\end{theorem}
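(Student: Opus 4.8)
The plan is to realise $w$ as a uniformisation of a Riemann surface spread over the sphere. First I would construct a connected, simply connected Riemann surface $R$ together with a holomorphic projection $\pi\colon R\to\hat{\C}$ that is everywhere a local homeomorphism (no branch points) and satisfies $\pi(R)=\hat{\C}$. Granting such an $R$, the uniformisation theorem identifies $R$ conformally with one of $\hat{\C}$, $\C$ or $\D$. Since I will arrange $\pi$ to be non-injective, $R$ is non-compact and the elliptic case $\hat{\C}$ is excluded. If $R$ is moreover of hyperbolic type, there is a conformal equivalence $\psi\colon\D\to R$, and then $w=\pi\circ\psi$ is meromorphic, locally univalent (as a composition of a conformal map with an unbranched projection), and satisfies $w(\D)=\pi(R)=\hat{\C}$. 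Thus it suffices to exhibit a hyperbolic such $R$.

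For $R$ I would glue countably many copies of $\hat{\C}$, each cut along an arc, according to the combinatorial pattern of an infinite tree. Cutting a sphere along an arc $I$ yields a simply connected sheet whose projection already covers all of $\hat{\C}$, since both banks of the cut map onto $I$; hence surjectivity of $\pi$ comes for free from a single sheet, and the cutting and gluing serve only to enlarge $R$ while keeping it simply connected and unbranched. The delicate point in the gluing is that an endpoint of a cut tends to become either an interior branch point (if finitely many sheets are attached there) or a logarithmic, ideal singularity (if infinitely many are), and in the latter case the corresponding value would be deleted from $\pi(R)$. I would avoid this by letting the cut vary from sheet to sheet, so that each point that is a cut endpoint in one sheet is an interior regular point of another; the tree pattern then keeps $R$ simply connected, and a bounded branching valence keeps $\pi$ an unbranched local homeomorphism while preserving $\pi(R)=\hat{\C}$.

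The heart of the proof, and the step I expect to be the main obstacle, is to force $R$ to be of hyperbolic rather than parabolic type. This is genuinely delicate: the very same local combinatorics arranged with slow (polynomial) growth produces a parabolic surface conformally equivalent to $\C$, in which case the construction would instead yield a locally univalent meromorphic surjection of the plane onto $\hat{\C}$. I would rule out the parabolic case by choosing the gluing tree to grow exponentially, so that $R$ carries a non-constant negative subharmonic function---equivalently a Green's function---placing $R$ outside the class $O_G$ and hence in the hyperbolic type. The existence of unbranched, surjective, simply connected spreads over $\hat{\C}$ of hyperbolic type is exactly the kind of statement addressed by the classical theory of the conformal type problem via Speiser/Nevanlinna line complexes, which I would invoke to certify the type of $R$.

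With a hyperbolic $R$ in hand the proof concludes immediately by composing the Riemann map $\psi\colon\D\to R$ with $\pi$. I would finally remark, to connect with the surrounding discussion, that setting $A=\tfrac12\,S_w$ gives an analytic coefficient for which \eqref{eq:de2} has no zero-free solution: the zeros of any non-trivial solution are the $a$-points of $w$ for a fixed $a\in\hat{\C}$, and surjectivity of $w$ forces every such solution to vanish somewhere in $\D$.
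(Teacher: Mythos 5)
Your strategy---build a simply connected, unbranched, surjective surface spread over $\widehat{\C}$, prove it is hyperbolic, and uniformise---is a legitimate classical route, but as written it has two genuine gaps. The first is in the gluing step. At an endpoint of a cut where $k\ge 2$ sheets are joined cyclically, the projection $\pi$ is locally $k$-to-$1$; that is an algebraic branch point, so your claim that ``a bounded branching valence keeps $\pi$ an unbranched local homeomorphism'' is false. To keep $\pi$ a local homeomorphism you must either not glue at all at that endpoint or attach infinitely many sheets, creating a logarithmic singularity and deleting the point from that tower; surjectivity must then be recovered by covering that base point regularly on other sheets. Arranging all of this simultaneously with simple connectivity is exactly the hard content of the construction, and your proposal only gestures at it. The second gap is the type problem: ``choose the tree to grow exponentially and invoke the classical theory'' is not an argument, and the line-complex criteria you appeal to are formulated for Speiser surfaces (finitely many base points), which your varying-cut construction need not be. Note also that the type problem is avoidable: if the surface came out parabolic you would get a locally univalent meromorphic surjection $w\colon\C\to\widehat{\C}$, and since $\widehat{\C}$ is compact and $\set{w(D(0,r))}_r$ is an open exhaustion of it, already $w(D(0,R_0))=\widehat{\C}$ for some finite $R_0$; rescaling gives the theorem. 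So the hyperbolicity discussion, besides being unproved, is unnecessary.

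For comparison, the paper's proof is entirely explicit and avoids all of this. It takes the rational map $R(z)=z+1/(2z^2)$, which is locally univalent on $\Omega=\widehat{\C}\setminus\set{\z_1,\z_2,\z_3,0}$ (the cube roots of unity and the double pole) and still surjective there: for finite $w$ the equation $R(z)=w$ reduces to the cubic $2z^3-2wz^2+1=0$, whose roots cannot all lie in the excluded set, and $w=\infty$ is attained at $z=\infty\in\Omega$. After the inversion $M(z)=1/z$, the set $M^{-1}(\Omega)$ is a plane domain omitting three points, hence hyperbolic, and composing $R\circ M$ with the analytic universal covering map $\Pi\colon\D\to M^{-1}(\Omega)$ gives the desired function. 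In effect the paper replaces your infinite surface by the complement of four points in a fixed degree-three cover, so that the only nontrivial input is the existence of the universal covering of a triply punctured plane. If you want to salvage your approach, you should either carry out the surface construction and type estimate in detail or, better, notice that puncturing a finite branched cover at its critical points already does the job.
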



\subsection{Normal solutions of differential equations}

By Corollary~\ref{cor:33}, the existence of one non-vanishing solution of \eqref{eq:de2} allows us
to survey many specific properties of all solutions of \eqref{eq:de2}.
We now drop the additional assumption on the existence of a non-vanishing solution, and 
proceed to study solutions which may have zeros, but
whose behavior around their zeros is in a~certain sense regular.
This leads us to the concept of normality.
Recall that the meromorphic function $f$ in $\D$ is called normal (in the sense of Lehto and Virtanen) if and only if
\begin{equation} \label{eq:normaldef}
\sigma(f) = \sup_{z\in\D} \, (1-|z|^2) \frac{|f'(z)|}{1+|f(z)|^2} < \infty;
\end{equation}
for more information on normal functions, see for example \cite{LV:1957}.


\begin{proposition} \label{prop:normal}
Let $f$ be a non-trivial solution of \eqref{eq:de2} with $A\in H^\infty_2$, and let $\{z_n\}_{n=1}^\infty$ be the zero-sequence of $f$. 
Then, the following conditions are equivalent:
\begin{enumerate}
\item[\rm (i)]
$f$ is normal;

\item[\rm (ii)]
$\sup_{n\in\N} \, (1-|z_n|^2)|f'(z_n)|<\infty$;

\item[\rm (iii)]
$f$ is uniformly bounded in $\bigcup_{n=1}^\infty D\big( z_n,c(1-|z_n|) \big)$ for some $0<c<1$.
\end{enumerate}
\end{proposition}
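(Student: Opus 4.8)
The plan is to establish the equivalences through the cycle $(i) \Rightarrow (ii) \Rightarrow (iii) \Rightarrow (i)$, exploiting throughout that $f$ solves $f''+Af=0$ with $A \in H^\infty_2$. The decisive structural fact is that the zeros $\{z_n\}$ of a non-trivial solution are \emph{separated in the hyperbolic metric} by a constant depending only on $\nm{A}_{H^\infty_2}$, by \cite[Theorem~3]{S:1955}. This separation lets me pass freely between pointwise information at the zeros and uniform information on a union of pseudohyperbolic disks, and it is what makes the three conditions genuinely equivalent rather than merely related by inequalities.

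The implication $(i) \Rightarrow (ii)$ is immediate from the definition of normality: evaluating \eqref{eq:normaldef} at a zero $z_n$ gives $f(z_n)=0$, so $(1-|z_n|^2)|f'(z_n)| \le \sigma(f)$, and taking the supremum over $n$ yields $(ii)$. For $(ii) \Rightarrow (iii)$ I would work on a single pseudohyperbolic disk $D(z_n, c(1-|z_n|))$ and rescale by the automorphism $\vp_{z_n}$, transferring the equation to a normalized one whose coefficient is controlled by $\nm{A}_{H^\infty_2}$ because of the conformally invariant weight $(1-|z|^2)^2$. On the rescaled disk the solution is determined by the Cauchy data at the center, namely $f(z_n)=0$ together with the normalized derivative $(1-|z_n|^2)|f'(z_n)|$, which is bounded by hypothesis. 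A standard Gronwall-type or power-series estimate for the initial value problem then bounds $|f|$ uniformly on the fixed-radius disk in terms of these data and $\nm{A}_{H^\infty_2}$, uniformly in $n$; undoing the rescaling gives the uniform bound on $\bigcup_n D(z_n, c(1-|z_n|))$ required by $(iii)$.

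The substantive direction is $(iii) \Rightarrow (i)$, and this is where I expect the main obstacle to lie. Here I must promote a bound on $f$ over the union of disks around the zeros to the global normality estimate over \emph{all} of $\D$. The idea is a dichotomy: at a point $z$ that is pseudohyperbolically far from every zero, the quantity $1+|f(z)|^2$ in the denominator of \eqref{eq:normaldef} is bounded below away from zero (since $f$ cannot be small near a point bounded away from its zeros, by a normal-families or minimum-modulus argument for solutions with $A \in H^\infty_2$), while $(1-|z|^2)|f'(z)|$ is controlled by a Schwarz--Pick or Cauchy-estimate argument applied to $f'/f$, whose singularities are exactly the zeros $z_n$. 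At a point $z$ close to some zero, the uniform bound from $(iii)$ on the fixed-radius disk, combined with a Cauchy estimate for $f'$ on a slightly smaller disk, controls $(1-|z|^2)|f'(z)|$ directly, and the numerator being small is harmless. The hyperbolic separation guarantees that these two regimes cover $\D$ with uniform constants, so patching them produces a finite bound for $\sigma(f)$. The delicate point is arranging the minimum-modulus estimate away from the zeros uniformly in the location of $z$; I would handle this by the rescaling argument above together with the fact that the zeros of the rescaled equation remain separated, so that on each normalized disk the solution is bounded away from zero on the complement of a fixed neighborhood of its (uniformly separated) zeros.
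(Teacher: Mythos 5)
Your cycle $(i)\Rightarrow(ii)\Rightarrow(iii)\Rightarrow(i)$ matches the paper's structure, and the first two legs are sound: $(i)\Rightarrow(ii)$ is the same one-line evaluation of \eqref{eq:normaldef} at a zero, and your rescaled initial-value estimate for $(ii)\Rightarrow(iii)$ is in substance the paper's argument (it integrates $f''=-Af$ twice from $z_n$ and absorbs the double integral for $c$ small, a Gronwall-type step exactly as you describe). Your near-zero half of $(iii)\Rightarrow(i)$ also works: a Cauchy estimate for $f'$ on a slightly smaller disc inside $D(z_n,c(1-|z_n|))$ bounds $(1-|z|^2)|f'(z)|$ there, and the numerator of \eqref{eq:normaldef} then takes care of itself.

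The gap is in the far-from-zeros half of $(iii)\Rightarrow(i)$, and neither of your two proposed ingredients survives scrutiny. First, the claim that $|f|$ is bounded below at points pseudo-hyperbolically far from the zeros is false: a solution with $A\in H^\infty_2$ can tend to $0$ at the boundary without vanishing, and condition $(iii)$ gives no lower bound anywhere. (It is also unnecessary, since $1+|f|^2\geq 1$ trivially.) Second, and more seriously, ``a Schwarz--Pick or Cauchy-estimate argument applied to $f'/f$'' is not an argument: Schwarz--Pick does not apply to $f'/f$, and a Cauchy estimate presupposes an a priori bound on $f'/f$ on a larger set, which is precisely what is at stake. The missing device is the one the paper uses. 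Write $|f'|/(1+|f|^2)\leq |f'|/(2|f|)$, so it suffices to bound the logarithmic derivative; then take a second solution $g$ with $W(f,g)=1$ and set $w=g/f$, so that $w'=1/f^2$ and $f'/f=-\tfrac12\,w''/w'$. Since $S_w=2A\in H^\infty_2$, Nehari's theorem makes $w$ uniformly locally univalent, and the local pre-Schwarzian bound for univalent functions (Lemma~\ref{lemma:ulu}(i)) gives $(1-|a|^2)\,|w''(a)/w'(a)|\leq 6/\min\{\eta,\delta\}$ at every point $a$ with $\varrho_p(a,\{z_n\})\geq\delta$, the poles of $w$ being exactly the zeros of $f$. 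This yields the normality estimate away from the zeros with no lower bound on $|f|$ at all. (The paper also reuses the same lemma at a maximum-modulus point of $f'$ on $\partial\Delta_p(z_n,\eta t)$ to handle the near-zero regime; your Cauchy-estimate variant there is an acceptable substitute.) Without identifying this mechanism --- or an equivalent quantitative statement that a large value of $(1-|z_0|^2)|f'(z_0)/f(z_0)|$ forces a zero of $f$ pseudo-hyperbolically close to $z_0$ --- the proof of $(iii)\Rightarrow(i)$ is incomplete.
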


By Proposition~\ref{prop:normal} every solution of \eqref{eq:de2} with $A\in H^\infty_2$, which has only finitely
many zeros, is normal. In particular, if $A\in H^\infty_{2,0}$, then 
all solutions of \eqref{eq:de2} 
are normal \cite[Theorem~1]{S:1955} --- yet all non-trivial solutions may lie outside $N$
\cite[pp.~57-58]{H:2013}. 

Corollary~\ref{cor:34} below states that
all normal solutions of \eqref{eq:de2} belong to certain Hardy space,
under the assumption that $|A(z)|^2(1-|z|^2)^3\, dm(z)$ is a~Carleson measure.
The proof of Corollary~\ref{cor:34} is based on the following result,
whose proof bears similarity to that of \cite[pp.~105-107]{BJ:1994}.


\begin{theorem}\label{thm:carleson}
If $w$ is a meromorphic function in $\D$
such that $|S_w(z)|^2 (1-|z|^2)^3\, dm(z)$ is a~Carleson measure and $w'$ is normal, 
then for all sufficiently small $0<p<\infty$ 
there exists a constant $C=C(p,\nm{S_w}_{F^2}, \sigma(w'))$
with $1<C<\infty$ such that $\nm{1/w'}_{H^p} \leq C$.
\end{theorem}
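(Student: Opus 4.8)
The plan is to transfer the problem to the analytic function $g=1/w'$, which for $A=S_w/2$ is a~constant multiple of the square of a~solution of \eqref{eq:de2}, and to show that a~suitable branch of $\log w'$ lies in $\BMOA$ away from the poles of $w$; the conclusion $\nm{1/w'}_{H^p}\le C$ then follows from the John--Nirenberg inequality. First I would record the structural facts. Since normality is spherically invariant, $g=1/w'$ is normal with $\sigma(g)=\sigma(w')$, and because $w$ is locally univalent, $w'$ is zero-free, so the zeros of $g$ are exactly the poles $\{z_n\}$ of $w$. The hypothesis that $|S_w(z)|^2(1-|z|^2)^3\,dm(z)$ is a~Carleson measure gives $A=S_w/2\in F^2\subset H^\infty_2$ with finite total mass $M:=\int_\D|S_w(z)|^2(1-|z|^2)^3\,dm(z)<\infty$; in particular \eqref{eq:conj} holds, and by \cite[Theorem~3]{S:1955} the sequence $\{z_n\}$ is separated in the hyperbolic metric with constant depending only on $\nm{S_w}_{F^2}$.

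The core of the argument, following \cite[pp.~105--107]{BJ:1994}, is an~estimate for the logarithmic derivative $P:=w''/w'=-g'/g=(\log w')'$. The Schwarzian identity rewrites as the Riccati equation $P'=S_w+\tfrac12 P^2$, so that $\log w'(z)-\log w'(0)=\int_0^z P(\zeta)\,d\zeta$ off the poles of $w$. The aim is to show that, after subtracting the principal parts $-2/(z-z_n)$ of $P$ at the poles, the remaining part $P_0$ satisfies that $|P_0(z)|^2(1-|z|^2)\,dm(z)$ is a~Carleson measure of norm $\lesssim M+\sigma(w')^2$; equivalently, the analytic function $\log w'+2\log B$ lies in $\BMOA$, where $B$ is the Blaschke product with zeros $\{z_n\}$. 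Integrating the Riccati equation and estimating the linear term $S_w$ by the Carleson mass $M$ as in Proposition~\ref{prop:bjest}, the only genuinely nonlinear contribution is $\tfrac12 P^2$; this is precisely where the normality of $w'$ enters, furnishing the a~priori bound $(1-|z|^2)|P(z)|=(1-|z|^2)|w''(z)/w'(z)|\lesssim\sigma(w')\,(|w'|+|w'|^{-1})$ needed to absorb the quadratic term into a~Carleson estimate.

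Granting this $\BMOA$ bound, I would finish as follows. Writing $\log|1/w'|=2\log|B|-v$ with $v:=\Real(\log w'+2\log B)\in\BMO$ and $\nm{v}_{\BMO}\lesssim M^{1/2}+\sigma(w')$, the boundary values satisfy $|1/w'|^p=|B|^{2p}e^{-pv}\le e^{-pv}$ because $|B|\le1$, so the smallness of the Blaschke factor discards the pole contributions without any further hypothesis on $B$. The John--Nirenberg inequality then yields $\int_0^{2\pi}|(1/w')(re^{i\theta})|^p\,d\theta\le\int_0^{2\pi}e^{-pv}\,d\theta\le C$ for every $p$ below a~threshold $\asymp\nm{v}_{\BMO}^{-1}$, uniformly in $r$, which is the claim with $C=C(p,\nm{S_w}_{F^2},\sigma(w'))$. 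The main obstacle is the second step: closing the Carleson estimate for $P$ against the nonlinear term $\tfrac12 P^2$ of the Riccati equation while simultaneously isolating the poles $\{z_n\}$---in particular verifying that they form a~Blaschke (indeed Carleson--Newman) sequence so that $B$ exists. It is here that the hyperbolic separation of $\{z_n\}$ and the normality of $w'$ must be combined, mirroring the delicate passage in \cite[pp.~105--107]{BJ:1994}.
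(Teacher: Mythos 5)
Your route is genuinely different from the paper's, and unfortunately the two steps you yourself flag as ``the main obstacle'' are exactly where it breaks down, for reasons that go beyond technical labour. First, the Blaschke product $B$ over the poles of $w$ need not exist a priori: hyperbolic separation of $\{z_n\}$ (from \cite[Theorem~3]{S:1955}) does not imply the Blaschke condition $\sum(1-|z_n|)<\infty$ --- a $\delta$-net in the hyperbolic metric is separated but has divergent Blaschke sum --- and the hypothesis here is that $|S_w(z)|^2(1-|z|^2)^3\,dm(z)$ is a Carleson measure, not $|S_w(z)|(1-|z|^2)\,dm(z)$, so Theorem~\ref{thmnew:2} is not available either. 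Since the Blaschke condition on the zeros of $1/w'$ is itself a consequence of the conclusion $1/w'\in H^p$, your construction is circular at the outset. Second, the proposed global estimate $\log w'+2\log B\in\BMOA$ is a substantially stronger statement than the theorem, and the mechanism you offer for absorbing the quadratic term of the Riccati equation fails precisely where it is needed: normality of $w'$ gives $(1-|z|^2)|w''(z)/w'(z)|\le\sigma(w')\big(|w'(z)|+|w'(z)|^{-1}\big)$, which degenerates as $z\to z_n$ because $|w'|\to\infty$ there. Finally, the John--Nirenberg endgame produces a constant containing $e^{-pv(0)}=|w'(0)|^{-p}|B(0)|^{-2p}$, and $|B(0)|=\prod_n|z_n|$ is not controlled by $\nm{S_w}_{F^2}$ and $\sigma(w')$.

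The paper uses normality in the opposite direction and never goes near the poles: by the Lipschitz continuity of normal functions, $|w'|\ge1$ on pseudo-hyperbolic discs of fixed radius $s(\sigma(w'))$ about each pole, see \eqref{eq:yama}, so the set where $|w'|$ is small stays at a definite hyperbolic distance from $\mathcal{P}_w$. It then runs a stopping-time argument on dyadic Carleson squares: Lemma~\ref{lemma:L3}, a Bishop--Jones type $L^2$ estimate for $\log w'$ on the chord-arc domain obtained by deleting the stopped squares (where $\log w'$ is genuinely analytic), shows that each time $|w'|$ drops by the factor $\varepsilon_0$ the total length of the stopped squares at least halves. This yields a weak-type bound for the non-tangential maximal function of $1/w'$ and hence $1/w'\in H^q$ for all small $q$; no global $\BMOA$ statement and no Blaschke product are ever needed. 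To salvage your approach you would have to establish the Blaschke condition and the modified Carleson estimate independently, which in effect amounts to redoing the stopping-time analysis.
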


If we assume in Theorem~\ref{thm:carleson} that $w$ has only finitely many poles
(which all are simple, since the analyticity of $S_w$ implies that $w$ is locally univalent), 
then the assumption on the normality of $w'$ is not needed. In fact, this follows from the other assumptions,
since in this case $f = 1/\sqrt{w'}$ is a solution of $f''+(1/2) S_w f = 0$ having only finitely many zeros, 
and hence $f$ is normal by Proposition~\ref{prop:normal}. As a consequence we deduce that $w'$ is normal.


\begin{corollary} \label{cor:34}
If $|A(z)|^2(1-|z|^2)^3\, dm(z)$ is a~Carleson measure,
then all normal solutions $f$ of \eqref{eq:de2} belong to $H^p$ for any sufficiently small $0<p<\infty$.
\end{corollary}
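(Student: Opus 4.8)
The plan is to derive Corollary~\ref{cor:34} directly from Theorem~\ref{thm:carleson} by passing from the coefficient $A$ to the Schwarzian derivative of a quotient of solutions. First I would fix a non-trivial normal solution $f$ of \eqref{eq:de2} and choose a second solution $f_2$ linearly independent to it, so that $w = f/f_2$ is a locally univalent meromorphic function with $S_w = 2A$. With this identification, $|S_w(z)|^2(1-|z|^2)^3\,dm(z)=4\,|A(z)|^2(1-|z|^2)^3\,dm(z)$ is a Carleson measure precisely by the hypothesis, and moreover $f_2$ is (up to a multiplicative constant) $1/\sqrt{w'}$, so that $f_2^2$ is a constant multiple of $1/w'$. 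The target space $H^p$ membership for $f$ will thus be obtained by applying Theorem~\ref{thm:carleson} to estimate $\nm{1/w'}_{H^p}$.

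The key structural point is to match $f$ itself rather than the auxiliary solution $f_2$. Since $1/w'$ equals a constant multiple of $f_2^2$, applying Theorem~\ref{thm:carleson} to $w$ directly bounds $\nm{f_2^2}_{H^p}$, i.e. places $f_2\in H^{2p}$. To obtain the conclusion for the given normal solution $f$, I would instead form the quotient $\widetilde w = f_2/f$ (equivalently $1/w$) so that $1/\widetilde w'$ is a constant multiple of $f^2$; one checks $S_{\widetilde w}=S_w=2A$ since the Schwarzian is invariant under precomposition by Möbius maps and $\widetilde w = 1/w$ is such a map applied to $w$. Thus the Carleson-measure hypothesis on $S_{\widetilde w}$ holds as well, and Theorem~\ref{thm:carleson} applied to $\widetilde w$ gives $\nm{f^2}_{H^p}\lesssim 1$ for all sufficiently small $p$, whence $f\in H^{2p}$, which is the desired conclusion after relabeling $p$.

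The main obstacle is verifying the normality hypothesis of Theorem~\ref{thm:carleson}, namely that $\widetilde w'=(1/w)'$ is normal. Here the normality assumption on the solution $f$ enters decisively: since $\sigma$ measures the spherical derivative, normality of $f$ as a meromorphic (indeed analytic) function should translate into normality of $\widetilde w'$, because $1/\widetilde w'$ is a constant multiple of $f^2$ and the spherical derivative of $f^2$ is controlled by that of $f$. I would make this precise by computing $\sigma(\widetilde w')$ in terms of $\sigma(f)$, using that $\widetilde w' = c/f^2$ together with the chain-rule behaviour of the spherical derivative; the bound $\sigma(f)<\infty$ from \eqref{eq:normaldef} then yields $\sigma(\widetilde w')<\infty$. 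With normality established, Theorem~\ref{thm:carleson} applies and the constant $C$ it produces depends only on $p$, $\nm{S_{\widetilde w}}_{F^2}=2\nm{A}_{F^2}$, and $\sigma(\widetilde w')$, all of which are finite, completing the argument.
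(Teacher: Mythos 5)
Your proposal is correct and takes essentially the same approach as the paper: the paper simply chooses $w=g/f$ with $W(f,g)=1$ from the outset, so that $w'=1/f^2$ and $\sigma(w')\le 2\sigma(f)$ immediately, rather than first forming $f/f_2$ and then inverting. Theorem~\ref{thm:carleson} is then applied exactly as you describe.
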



\section{Proof of Theorem~\ref{thmnew:2}}

The following result concerns
the growth of solutions of linear differential equations.
Recall that $\log^+ x = \max\{ \log x, 0 \}$ and 
$m(r,g)$ is the Nevanlinna proximity function of~$g$, as in \eqref{eq:prox}.


\begin{oldtheorem}[\protect{\cite[Theorem~4.5]{H:2000}}] \label{th:jh}
If
\begin{equation*} 
\int_{\D} |B(\zeta)| (1-|\zeta|^2) \, dm(\zeta) < \infty,
\end{equation*}
then every solution $g$ of $g''+B g = 0$ is of bounded
characteristic, and 
\begin{equation*}
m(r,g)
     \leq \log^+ \big( |g(0)| + |g'(0)| \big)
         + K \int_{D(0,r)} |B(\zeta)| (1-|\zeta|^2) \, dm(\zeta), \quad 0\leq r < 1,
\end{equation*}
where $0<K<\infty$ is an absolute constant.
\end{oldtheorem}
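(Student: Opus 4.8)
The plan is to pass from the differential equation to a Volterra integral equation, turn this into a pointwise growth bound via the Gronwall lemma, and then average over circles, exploiting the subharmonicity of $|B|$ to manufacture the weight $(1-|\zeta|^2)$. To begin, integrating $g''=-Bg$ twice from the origin produces the representation
\[
g(z)=g(0)+g'(0)\,z-\int_0^z (z-\zeta)\,B(\zeta)\,g(\zeta)\,d\zeta,\qquad z\in\D,
\]
the integral being taken along the segment $[0,z]$; two differentiations recover both the equation and the initial data. Restricting to the radius $z=re^{i\theta}$, taking moduli, and writing $u(t)=|g(te^{i\theta})|$, $\phi(t)=|B(te^{i\theta})|$ and $A_0=|g(0)|+|g'(0)|$, this yields $u(r)\le A_0+\int_0^r (r-t)\,\phi(t)\,u(t)\,dt$ for $0\le r<1$.

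Next I would apply a Gronwall argument tailored to this second-kind kernel. Letting $w(r)$ denote the right-hand side, one has $u\le w$, $w(0)=A_0$, $w'(0)=0$ and $w''=\phi\,u\le\phi\,w$, whence $(w'/w)'\le\phi$ and therefore $(\log w)'(r)\le\int_0^r\phi(t)\,dt$. Integrating once more and reversing the order of integration gives the pointwise bound
\[
\log^+|g(re^{i\theta})|\le \log^+A_0+\int_0^r (r-t)\,|B(te^{i\theta})|\,dt ,
\]
in which $\log^+A_0=\log^+\!\big(|g(0)|+|g'(0)|\big)$ is precisely the constant term of the theorem.

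The crux is the third step: integrate this inequality in $\theta$ and convert the radial integral into the claimed area integral. Setting $\mu(t)=\tfrac1{2\pi}\int_0^{2\pi}|B(te^{i\theta})|\,d\theta$, the averaged estimate reads $m(r,g)\le\log^+A_0+\int_0^r (r-t)\,\mu(t)\,dt$, while $\int_{D(0,r)}|B(\zeta)|(1-|\zeta|^2)\,dm(\zeta)=2\pi\int_0^r \mu(t)\,(1-t^2)\,t\,dt$. A pointwise comparison of the weights $(r-t)$ and $(1-t^2)t$ fails badly near the origin, so everything hinges on the integral inequality
\[
\int_0^r (r-t)\,\mu(t)\,dt\le 2\int_0^r (1-t^2)\,t\,\mu(t)\,dt .
\]
Here I would use that $|B|$ is subharmonic, so that $\mu$ is non-decreasing; as both sides are linear in $\mu$, it suffices to verify the inequality on the extreme non-decreasing profiles $\mu=\chi_{[s,r]}$, for which an elementary computation collapses the ratio to $2(r-s)\big/\big((r+s)(2-r^2-s^2)\big)\le 2$. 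Monotonicity of $\mu$ pushes its mass toward the boundary and thereby offsets the over-weighting of the origin by the kernel $(r-t)$, giving $m(r,g)\le\log^+A_0+\tfrac1\pi\int_{D(0,r)}|B(\zeta)|(1-|\zeta|^2)\,dm(\zeta)$ with the absolute constant $K=1/\pi$.

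Finally, finiteness of $\int_{\D}|B(\zeta)|(1-|\zeta|^2)\,dm(\zeta)$ bounds $m(r,g)$ uniformly in $r$; since $g$ is analytic it has no poles, so $N(r,g)=0$ and $T(r,g)=m(r,g)$ stays bounded, i.e.\ $g$ is of bounded characteristic. The main obstacle is exactly the weight conversion: the one-dimensional Gronwall kernel is far too large near the origin to be dominated pointwise by the two-dimensional weight $(1-|\zeta|^2)$, and only the monotonicity of the circular means of $|B|$ rescues the comparison.
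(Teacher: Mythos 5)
Your proof is correct, and it follows essentially the route the paper attributes to its source: the paper does not prove this statement itself but imports it from \cite[Theorem~4.5]{H:2000}, describing that proof as obtained ``by integrating \eqref{eq:de2} and applying the Gronwall lemma,'' which is exactly your strategy. The one step that needed scrutiny, the weight conversion via monotonicity of the circular means of the subharmonic function $|B|$, is sound, since the extreme-profile inequality reduces to $(r+s)(2-r^2-s^2)\ge r-s$, which holds for $0\le s\le r\le 1$ because $r-r^3\ge 0$ and $s\,(3-rs-r^2-s^2)\ge 0$.
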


We proceed to prove Theorem~\ref{thmnew:2}. Let $\kappa\in\D$. If $f$ is a solution of \eqref{eq:de2}, then
\begin{equation*}
g_\kappa(\zeta) = \gamma \, f\big( \varphi_\kappa(\zeta)\big) \big( \varphi_\kappa'(\zeta) \big)^{-1/2}, \quad \gamma\in\C,
\end{equation*}
is a solution of
\begin{equation} \label{eq:transde2}
g_\kappa'' + B_\kappa g_\kappa = 0, \quad B_\kappa(\zeta) =
A\big( \varphi_\kappa(\zeta) \big)
              \varphi_\kappa'(\zeta)^2 + \frac{1}{2} S_{\varphi_\kappa}(\zeta), \quad \zeta\in\D,
\end{equation}
see for example \cite[p.~394]{I:1944} or \cite[Lemma~1]{P:1982}.
Here $\varphi_\kappa(\zeta) =(\kappa-\zeta)/(1-\overline{\kappa}\zeta)$, and hence
the Schwarzian derivative $S_{\varphi_\kappa}$ vanishes identically.
The change of variable $z=\varphi_\kappa(\zeta)$ implies
\begin{equation*}
\sup_{\kappa\in\D} \, \int_{\D} |B_\kappa(\zeta)| (1-|\zeta|^2) \,
dm(\zeta)
        = \sup_{\kappa\in\D} \, \int_{\D} |A(z)| \big( 1 - |\varphi_\kappa(z)|^2 \big) \, dm(z)
           = \nm{A}_{F^1},
\end{equation*}
and by means of Theorem~\ref{th:jh} we obtain
\begin{equation} \label{eq:trv2}
m(r,g_\kappa) 
\leq \log^+ \big( |g_\kappa(0)| + |g'_\kappa(0)| \big) + K  \, \nm{A}_{F^1}, \quad 0\leq r < 1, \quad \kappa\in\D,
\end{equation}
for some absolute constant $0<K<\infty$.

Let $\{ z_n\}_{n=1}^\infty$ be the zero-sequence of $f$.
For each $k\in\N$, we have $g_{z_k}(0)=0$, and this zero of $g_{z_k}$ at the origin
is simple as all the zeros of all non-trivial solutions of \eqref{eq:transde2} are.
Since $g'_{z_k}(0)\neq 0$,  we may normalize the solution $g_{z_k}$
to satisfy
\begin{equation*}
g_{z_k}'(0)= \gamma \, f'(z_k) (|z_k|^2-1)^{1/2} = 1
\end{equation*}
by choosing the constant $\gamma=\gamma(f,k)$ appropriately.
We proceed to prove that the Blaschke sum regarding the
zeros of the normalized solution  $g_{z_k}$ of
\eqref{eq:transde2} is uniformly bounded for all $k\in\N$.
Without loss of generality, we may suppose that the zeros
$\{\zeta_{n,k} \}_{n=1}^\infty$ of $g_{z_k}$ satisfy
$|\zeta_{1,k}| = 0 < |\zeta_{2,k}| \leq |\zeta_{3,k}| \leq \cdots$.
By applying Jensen's formula to $z\mapsto z^{-1} g_{z_k}(z)$ results in
\begin{equation*} 
\frac{1}{2\pi} \int_0^{2\pi} \log{ \big|
g_{z_k}(re^{i\theta})\big|} \, d\theta
           = \sum_{\substack{n\in\N\\0<|\zeta_{n,k}|<r}} \log\frac{r}{|\zeta_{n,k}|} + \log{r}, \quad 0< r < 1.
\end{equation*}
Letting $r\to 1^-$, and taking account on \eqref{eq:trv2}, we get
\begin{equation} \label{eq:upest}
\sup_{k\in\N} \, \sum_{n=2}^\infty \big( 1-|\zeta_{n,k}| \big)
\leq  K  \, \nm{A}_{F^1}.
\end{equation}
Since the zeros of $g_{z_k}$ are precisely the images of the
zeros of $f$ under the mapping~$\varphi_{z_k}$,
\eqref{eq:upest} implies that the zero-sequence $\{z_n\}_{n=1}^\infty$ of $f$
satisfies
\begin{equation} \label{eq:unisep}
\begin{split}
\sup_{k\in\N} \, \sum_{n\neq k} \left( 1 - \left|
\frac{z_n-z_k}{1-\overline{z}_n z_k} \right| \right)
       = \sup_{k\in\N} \, \sum_{n=2}^\infty \big( 1-|\zeta_{n,k}| \big)
     \leq  K  \, \nm{A}_{F^1}.
\end{split}
\end{equation}

Since $A\in H^\infty_2$, $\{z_n\}_{n=1}^\infty$ is separated
\cite[Theorem~3]{S:1955}. Let $0<\delta<1$ be a~constant such that
$\varrho_p(z_n,z_k)\geq \delta$ for all natural numbers $n\neq k$, where
$\varrho_p$ stands for the pseudo-hyperbolic distance. Now
\eqref{eq:unisep}, and the inequality $-\log{x} \leq x^{-1} (1-x)$ for $0<x<1$, imply 
\begin{equation*}
\sup_{k\in\N} \, \sum_{n\neq k} -\log \left|
\frac{z_n-z_k}{1-\overline{z}_n z_k} \right|
       \leq \frac{1}{\delta} \, \sup_{k\in\N} \, \sum_{n\neq k}  \left( 1 - \left| \frac{z_n-z_k}{1-\overline{z}_n z_k} \right| \right)
      \leq \frac{ K  \, \nm{A}_{F^1}}{\delta}.
\end{equation*}
We conclude that $\{z_n\}_{n=1}^\infty$ is uniformly separated.


\section{Proof of Theorem~\ref{thm:fact}} \label{sec:fact}

(i) Let $\{f_1,f_2\}$ be a solution base of \eqref{eq:de2} such that $g=f_2$ is non-vanishing, and the Wronskian
determinant $W(f_1,f_2)=-1$. Then $w=f_1/f_2$ is analytic, locally univalent, and it satisfies 
$S_w=2A$ and $w'=f_2^{-2}$.
Define  $h_\z(z) = \log w'(\vp_\z(z))$, where $\vp_\z(z)=(\z-z)/(1-\overline{\z}z)$ and $\z\in\D$.
According to \cite[Corollary~7]{BJ:1994}, 
\begin{equation*}
\bnm{h_\z-h_\z(0)}^2_{H^2} \lesssim \abs{h_\z'(0)}^2 + \int_{\D} \left| h_\z''(z)-h_\z'(z)^2/2 \right|^2 (1-\abs{z}^2)^3 \, dm(z). 
\end{equation*}
By a direct computation,
\begin{equation*}
h'_\z(z) = \frac{w''\big(\vp_\z(z)\big) \vp_\z'(z)}{w'\big( \vp_\z (z) \big)}, \quad
h_\z''(z) -\frac{h_\z'(z)^2}{2} = S_w\big( \vp_\z(z) \big) \big( \vp_\z'(z) \big)^2+ \frac{w''\big(\vp_\z(z)\big) \vp_\z''(z)}{w'\big( \vp_\z (z) \big)},
\end{equation*}
which implies
\begin{align}
\sup_{\z\in\D} \, \bnm{h_\z-h_\z(0)}^2_{H^2}
     & \lesssim \, \bnm{w''/w'}^2_{H^\infty_1}  \label{eq:j1}\\
       & \quad       +  \sup_{\z\in\D} \, \int_{\D} \left| S_w\big( \vp_\z(z) \big) \right|^2 \big| \vp_\z'(z) \big|^4 (1-\abs{z}^2)^3 \, dm(z) \label{eq:j2}\\
       & \quad       +  \sup_{\z\in\D} \, \int_{\D} \left|\frac{w''\big(\vp_\z(z)\big)}{w'\big( \vp_\z (z) \big)} \right|^2 
       \left| \vp_\z''(z) \right|^2(1-\abs{z}^2)^3 \, dm(z) \label{eq:j3}.
\end{align}
The right-hand side of \eqref{eq:j1} is finite, since $S_w\in H^\infty_2$ \cite[Theorem~2]{Y:1977}, while
\eqref{eq:j2} reduces to $\nm{S_w}^2_{F^2}$. 
Furthermore, \eqref{eq:j3} is finite, since it is bounded by
\begin{equation*}
 \bnm{w''/w'}_{H^\infty_1}^2 \, \sup_{\z\in\D} \, \int_{\D} 
       \bigg| \frac{\vp_\z''(z)}{\vp_\z'(z)} \bigg|^2(1-\abs{z}^2) \, dm(z) < \infty.
\end{equation*}
We conclude that $\log w'$ belongs to $\BMOA$. 

Now, any solution $f\not\equiv 0$ of \eqref{eq:de2}, which is linearly independent to $f_2$,
can be written as $f = \alpha f_1 + \beta f_2 = (\alpha w + \beta) f_2$, where $\alpha\neq 0$.
Then, functions $W= \alpha w + \beta$ and $g=f_2$ satisfy the assertion, since
$\log W' = \log\alpha + \log w'  \in\BMOA$ and $\log g = -2 \log w' \in \BMOA$
by the argument above. Moreover, all solutions $f$ of \eqref{eq:de2}, which are linearly dependent to $f_2$, 
satisfy $\log f = \log\beta + \log f_2 \in \BMOA$. The assertion (i) is proved.

(ii) Let $\{f_1,f_2\}$ be a solution base of \eqref{eq:de2} such that $g=f_2$ is non-vanishing, and the Wronskian
determinant $W(f_1,f_2)=-1$. Then $w=f_1/f_2$ is analytic, locally univalent, and it satisfies 
$S_w=2A$ and $w'=f_2^{-2}$. By the assumption $S_w\in H^\infty_2$, which implies
$\log w'\in\BLOCH$ \cite[Theorem~2]{Y:1977}. The assertion of (ii) follows as above.


\section{Proof of Corollary~\ref{cor:33}}

The following auxiliary result, which is well-known by experts, 
is proved for the convenience of the reader.


\begin{lemma}\label{lemma:prop1}
If $W$ is a locally univalent analytic function in $\D$ such that
$\log{W'}\in \BMOA$, then all finite $a$-points of $W$ (i.e.~solutions
of $W(z)=a\in\C$) are uniformly separated.
\end{lemma}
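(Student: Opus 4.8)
The plan is to show that the condition $\log W' \in \BMOA$ forces the $a$-points of $W$ to be images, under the locally univalent map, of a uniformly separated sequence, and that this separation is preserved. Since $W$ is locally univalent with $\log W' \in \BMOA$, a natural first step is to pass to the differential equation perspective: the function $W$ is (up to an affine change) a quotient $f_1/f_2$ of solutions of $f'' + \tfrac12 S_W f = 0$, and the $a$-points of $W$ are precisely the zeros of the solution $f_1 - a f_2$. Thus it suffices to produce a uniform lower bound on the separation that holds independently of $a$.

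First I would fix a point $z_0$ with $W(z_0) = a$ and compose with a disc automorphism $\vp_{z_0}$ to normalize the geometry at $z_0$; the key quantitative input is that the $\BMOA$ seminorm of $\log W'$ is conformally invariant in the relevant sense, so $\nm{\log W'}_{\BMOA}$ controls the behavior near every $a$-point uniformly. The heart of the argument is a local estimate: I would use the $\BMOA$ membership of $\log W'$ to show that $\log W'$ has bounded mean oscillation, hence (via the John--Nirenberg inequality) that $W'$ cannot vary too wildly on hyperbolic balls. Concretely, if two distinct $a$-points $z_1, z_2$ are too close in the pseudo-hyperbolic metric, then integrating $W'$ along a hyperbolic geodesic between them and using the near-constancy of $W'$ on small hyperbolic balls would force $W(z_1) \neq W(z_2)$, contradicting that both equal $a$. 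This yields a separation constant $\delta$ depending only on $\nm{\log W'}_{\BMOA}$.

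To upgrade separation to \emph{uniform} separation, I would estimate the Blaschke-type sum $\sum_{n \neq k}(1 - |\vp_{z_k}(z_n)|)$ and show it is bounded uniformly in $k$. Here the strategy mirrors the proof of Theorem~\ref{thmnew:2}: after normalizing at each $a$-point $z_k$ via $\vp_{z_k}$, the transformed function $W \circ \vp_{z_k}$ still has $\log$-derivative in $\BMOA$ with controlled seminorm, and a Jensen-type or Nevanlinna counting argument applied to the shifted map bounds the sum of $(1 - |\vp_{z_k}(z_n)|)$ by a constant multiple of $\nm{\log W'}_{\BMOA}$. Combining this uniform Blaschke-sum bound with the separation already established, and using the elementary inequality $-\log x \leq x^{-1}(1-x)$ for $0 < x < 1$ exactly as in the proof of Theorem~\ref{thmnew:2}, gives the uniform separation condition
\begin{equation*}
\inf_{k \in \N} \, \prod_{n \neq k} \left| \frac{z_n - z_k}{1 - \overline{z}_n z_k} \right| > 0.
\end{equation*}

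The main obstacle I anticipate is making the local step fully quantitative and uniform over all $a \in \C$ simultaneously: the separation constant and the Blaschke-sum bound must depend only on $\nm{\log W'}_{\BMOA}$ and not on the particular value $a$ or the location of the $a$-points. The conformal invariance of the $\BMOA$ seminorm is what makes this uniformity plausible, but the delicate point is controlling the oscillation of $W'$ along geodesics with constants that are genuinely independent of $a$; this is where the John--Nirenberg exponential integrability of $\BMOA$ functions will do the essential work, since it converts the seminorm bound into a uniform pointwise control on ratios $W'(z_1)/W'(z_2)$ for pseudo-hyperbolically close points.
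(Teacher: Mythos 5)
Your skeleton matches the paper's: reduce to the zeros of $W-a$, normalize at each $a$-point $z_k$ by composing with $\vp_{z_k}$, obtain a Blaschke-sum bound uniform in $k$ via Jensen's formula, and finish by combining it with plain separation and the inequality $-\log x\le x^{-1}(1-x)$. The separation step is fine in substance, though the pointwise control of $\log W'(z_1)-\log W'(z_2)$ for pseudo-hyperbolically close points comes from $\BMOA\subset\BLOCH$ (hyperbolic Lipschitz continuity of Bloch functions), not really from John--Nirenberg.

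The genuine gap is in the Blaschke-sum step, which you dispose of as ``a Jensen-type or Nevanlinna counting argument.'' Jensen's formula applied to $h_{z_k}(z)=-W(\vp_{z_k}(z))/(W'(z_k)(1-|z_k|^2))$ expresses the Blaschke sum of the zeros of $h_{z_k}$ as the limit of the integral means of $\log|h_{z_k}(re^{i\theta})|$; to bound the sum you must bound these means from above, uniformly in $k$, and that is the entire content of the lemma. The paper does this in three moves you do not supply: (a) a Littlewood--Paley computation showing $\sup_k\nm{\log h'_{z_k}}_{\BMOA}^2\lesssim\nm{\log W'}_{\BMOA}^2+1$ (the extra term coming from $\vp_{z_k}''/\vp_{z_k}'$ must be handled; the seminorm is not literally invariant under this precomposition); (b) the Cima--Schober theorem that $g\in\BMOA$ implies $e^{g}\in H^{p}$ for $p$ small depending only on $\nm{g}_{\BMOA}$ --- this is where John--Nirenberg exponential integrability actually does its work, not in the local ratio estimate --- which gives $\sup_k\nm{h'_{z_k}}_{H^p}\lesssim1$ and hence $\sup_k\nm{h_{z_k}}_{H^p}\lesssim1$; and (c) the inequality $\log x\le p^{-1}x^{p}$ to convert the uniform $H^p$ bound into the required upper bound on the means of $\log|h_{z_k}|$. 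Without step (b), or some equivalent, your plan does not close. The detour through the equation $f''+\tfrac12 S_W f=0$ is also unnecessary: one can work with $W-a$ directly, since its $\log$-derivative of the derivative is unchanged.
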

\begin{proof}
It suffices to prove the assertion for the zeros, for otherwise we may
consider the zeros of $W(z)-a$ for $a\in\C$. 
Let $\{z_n\}_{n=1}^\infty$ be the zero-sequence of $W$, and define
    $$
    h_{z_n}(z)=-\frac{W(\vp_{z_n}(z))}{W'(z_n)(1-|z_n|^2)},\quad z\in\D, \quad n\in\N.
    $$
Then $h_{z_n}(0)=0$ and $h_{z_n}'(0)=1$. Now $\log h'_{z_n}$ is well-defined and analytic, as $W$ is locally
univalent.
By the Littlewood-Paley identity \cite[Lemma~3.1, p.~228]{G:2007}, 
    \begin{equation*}
    \begin{split}
    \|\log h'_{z_n}\|_{\BMOA}^2
    &\asymp \sup_{a\in\D} \, \int_\D
    \left|\frac{W''(\vp_{z_n}(z))}{W'(\vp_{z_n}(z))}\vp'_{z_n}(z)+\frac{\vp_{z_n}''(z)}{\vp_{z_n}'(z)}\right|^2(1-|\vp_a(z)|^2)\,dm(z)\\
    &\lesssim \sup_{a\in\D}\,\int_\D
    \left|\frac{W''(\zeta)}{W'(\zeta)}\right|^2\big(1-\big|\vp_a(\vp_{z_n}(\zeta))\big|^2\big)\,dm(\zeta)\\
    &\qquad+\sup_{a\in\D}\,\int_\D
    \left|\frac{\vp_{z_n}''(z)}{\vp_{z_n}'(z)}\right|^2(1-|\vp_a(z)|^2)\,dm(z)\\
    &\lesssim\|\log W'\|_{\BMOA}^2+1, \quad n\in\N.
    \end{split}
    \end{equation*}
According to \cite[Theorem~1]{CS:1976}  we have $\sup_{n\in\N} \|h'_{z_n}\|_{H^p}\lesssim 1$ for sufficiently small
$p=p(\|\log W'\|_{\BMOA})$ with $0<p<\infty$, and hence $\sup_{n\in\N} \|h_{z_n}\|_{H^p}\lesssim 1$
by \cite[Theorem~5.12]{D:2000}. We deduce, by means of $\log x \leq p^{-1}  x^p$ for $0<x<\infty$, that
\begin{equation*}
\sup_{n\in\N} \, \frac{1}{2\pi} \int_0^{2\pi} \log | h_{z_n}(re^{i\theta}) | \, d\theta
  \lesssim \sup_{n\in\N} \, \|h_{z_n}\|^p_{H^p}\lesssim 1.
\end{equation*}
By applying Jensen formula to $z\mapsto z^{-1} h_{z_n}(z)$, and arguing as in the proof of
Theorem~\ref{thmnew:2}, we obtain the assertion.
\end{proof}

We proceed to prove Corollary~\ref{cor:33}.
Suppose that \eqref{eq:de2} admits a non-vanishing solution $g$, and let
$|A(z)|^2(1-|z|^2)^3\, dm(z)$ be a~Carleson measure. By Theorem~\ref{thm:fact}(i)
every non-trivial solution $f$ of \eqref{eq:de2} can be represented as $f=gW$, where
$\log g\in \BMOA$, and either $W$ is locally univalent such that $\log{W'}\in\BMOA$ or $W\in\C\setminus\{0\}$.

If $\log W'\in\BMOA$, 
then $W' = \exp( \log W' )$ belongs to some Hardy space by \cite[Theorem~1]{CS:1976}. 
This implies that also $W$ is in some Hardy space \cite[Theorem~5.12]{D:2000}. 
Analogously, $\log g\in\BMOA$ implies that $g=\exp(\log g)$ 
belongs to some Hardy space. In conclusion, under the assumptions
of Corollary~\ref{cor:33} every solution of \eqref{eq:de2} can be represented as a product
of two Hardy functions and hence every solution of \eqref{eq:de2} belongs to certain (fixed) Hardy
space.

Every non-trivial solution of \eqref{eq:de2}, which is linearly dependent to $g$, is zero-free, while
the zero-sequence of every non-trivial solution of \eqref{eq:de2}, which is linearly independent to $g$, 
is uniformly separated by Lemma~\ref{lemma:prop1}.


\section{Proof of Theorem~\ref{thm:cara}}

(i) The first part of the assertion follows directly from the proof of Theorem~\ref{thm:fact}(i),
since $g=f_2$ can be chosen to be any non-vanishing solution of \eqref{eq:de2}.

Conversely, suppose that \eqref{eq:de2} possesses a zero-free solution $f$ such that $\log f\in\BMOA$.
Let $\{f,g\}$ be a solution base of \eqref{eq:de2} such that $W(f,g)=1$, and let $w=g/f$.
Now $w'$ is locally univalent analytic function such that $\log w' = \log 1/f^2\in\BMOA$.
Moreover, the Schwarzian derivative $S_w=2A$ is analytic in $\D$. 
Since
\begin{equation*}
|S_w(z)|^2 \lesssim \left| \bigg( \frac{w''}{w'} \bigg)'(z)
\right|^2
         + \left| \frac{w''(z)}{w'(z)} \right|^4, \quad z\in\D,
\end{equation*}
and $\log w'\in\BMOA\subset\BLOCH$, standard estimates show that
$|S_w(z)|^2(1-|z|^2)^3\, dm(z)$ is a~Carleson measure.
The assertion of (i) follows.

(ii) Let $f$ be a zero-free solution of \eqref{eq:de2} with $A\in H^\infty_2$, and
let $\{f,g\}$ be a solution base such that
$W(f,g)=1$. Define $w=g/f$. Then $w$ is a locally
univalent analytic function satisfying $S_w=2A\in H^\infty_2$. 
By \cite[Theorem~2]{Y:1977} the pre-Schwarzian derivative
$w''/w' = (\log w')' \in H^{\infty}_1$, and hence $\log f = -2^{-1} \log w' \in\BLOCH$.

Conversely, suppose that \eqref{eq:de2} possesses a zero-free solution $f$ such that $\log f\in\BLOCH$.
Now, $A = - f''/f = - \left( f'/f \right)' - \left( f'/f \right)^2\in H^\infty_2$,
which concludes the proof of Theorem~\ref{thm:cara}.


\section{Proof of Proposition~\ref{prop:bjest}}

Let $f$ be a non-vanishing solution of \eqref{eq:de2}, and let $\{ f,g\}$ be a 
solution base of \eqref{eq:de2} such that $W(f,g)=1$. Define $w=g/f$, and notice that $w'$
is a locally univalent analytic function such that $w'=1/f^2$.
An application of \cite[Corollary~7]{BJ:1994} with $\varphi(z) = \log\big( w'(r z) r\big)$ yields
\begin{equation*}
\frac{1}{2\pi} \int_0^{2\pi} \bigg| \log\frac{w'(re^{i\theta})}{w'(0)} \bigg|^2 \, d\theta
\lesssim r^2 \left| \frac{w''(0)}{w'(0)} \right|^2
+ r^2 \int_{\D} |S_w(rz)|^2 (1-|z|^2)^3 r^2\, dm(z)
\end{equation*}
for $0<r<1$. This implies
\begin{equation*}
 \frac{4}{2\pi} \int_0^{2\pi} \bigg| \log\frac{f(re^{i\theta})}{f(0)} \bigg|^2 \, d\theta
    \lesssim 4r^2 \left| \frac{f'(0)}{f(0)} \right|^2
    + r^2 \int_{D(0,r)} |S_w(\zeta)|^2 \left(1-\frac{|\zeta|^2}{r^2} \right)^3 dm(\zeta)
\end{equation*}
for $0<r<1$, which proves the assertion.


\section{Proof of Theorem~\ref{thm:roth}}

Let $\widehat{\C} = \C \cup \set{\infty}$ denote the extended complex plane. Define
$R\colon \widehat{\C} \to \widehat{\C}$ by $R(z)=z+1/(2z^2)$.
Then $R$ is a rational function of degree three, and $R$ is locally univalent
in $\Omega = \widehat{\C}\setminus\set{\z_1,\z_2,\z_3,0}$, where $\z_1$,$\z_2$ and $\z_3$
are the three cube roots of unity.

Note that $R(z)=w$ has a solution $z\in\Omega$ for any $w\in\widehat{\C}$. First,
the function $R$ takes the value $w=\infty$ at the point $z=\infty\in\Omega$. Second, 
if $w\in\C$, then $z=0$ cannot be a solution of $R(z)=w$. Hence $R(z)=w$ is equivalent to
$2z^3-2 w z^2+1=0$, which has a solution $z\in\Omega$ for any $w\in\C$.
 
Let $M$ be the inversion $M(z)=1/z$, and  let $M^{-1}(\Omega)$ be the pre-image set of~$\Omega$.
Now, define $\Pi\colon \D \to M^{-1}(\Omega)$ to be a universal covering map.
Since $M^{-1}(\Omega)$ is a~plane set whose complement in $\C$ contains three points,
we may assume that $\Pi$ is analytic \cite[p.~125, Theorem~5.1]{C:1995}.
The asserted function is $R\circ M \circ \Pi\colon \D \to \widehat{\C}$. This composition
is locally univalent, since each function itself is locally univalent. The composition
is surjective by the construction.


\section{Auxiliary results for Proposition~\ref{prop:normal} and Theorem~\ref{thm:carleson}}\label{sec:auxkey}

If $w$ is meromorphic in $\D$ and $S_w\in H^\infty_2$, then $w\in \ULU(\eta)$ for some sufficiently small
$\eta = \eta(\nm{S_w}_{H^\infty_2})$
by Nehari's theorem \cite[Corollary~6.4]{P:1982}.  Here $w\in \ULU(\eta)$ means that $w$ is
meromorphic and uniformly locally univalent in $\D$, or equivalently, there exists $0<\eta\leq 1$ such that $w$ is 
meromorphic and univalent
in each pseudo-hyperbolic disc $\Delta_p(a,\eta)$ for $a\in\D$.

Denote by $\mathcal{P}_w$ the (discrete) set of poles of the meromorphic function $w$ in $\D$.
Let $w\in \ULU(\eta)$ for some $0<\eta\leq 1$, and let $w'$ be a normal function such that $\mathcal{P}_w \neq \emptyset$.
By the Lipschitz-continuity of normal functions (as mappings from $\D$ equipped with the hyperbolic metric
to the Riemann sphere with the chordal metric), there exists a constant $s=s(\sigma(w'))$ with $0<s<1$ such that
\begin{equation} \label{eq:yama}
\abs{w'(z)} \geq 1, \quad z\in \Delta_p(a,s), \quad a\in \mathcal{P}_w;
\end{equation}
see for example \cite[Theorem~1]{Y:1986}.

The following lemma lists some elementary properties of uniformly locally univalent functions,
which are needed later. It is a local version of the well-known result according to which
$\log w'\in\BLOCH$ for all analytic and univalent functions $w$ in $\D$.


\begin{lemma} \label{lemma:ulu}
Let $w\in \ULU(\eta)$ for some $0<\eta\leq 1$, and let $0<s<1$ be fixed.
Suppose that $a\in\D$ is any point satisfying $\varrho_p( a,\mathcal{P}_w)\geq s$.
\begin{enumerate}
\item[\rm (i)]
Then
\begin{equation*} 
\left| \frac{w''(a)}{w'(a)} \right| (1-\abs{a}^2) \leq \frac{6}{\min\{\eta,s\}}.
\end{equation*}

\item[\rm (ii)]
For each $0<t<1$
there exists a constant $C=C(\eta,s,t)$ with $1<C<\infty$ such that
\begin{equation} \label{eq:myclaimpre}
C^{-1} \leq \left| \frac{w'(z_1)}{w'(z_2)}\right| \leq C, \quad z_1,z_2 \in \Delta_p\big(a,t \cdot \min\{\eta,s\}\big).
\end{equation}
\end{enumerate}
\end{lemma}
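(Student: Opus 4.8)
The plan is to treat both parts as local incarnations of the classical univalent-function distortion estimates, transported from $\D$ to a pseudo-hyperbolic disc centered at $a$. Throughout set $r=\min\{\eta,s\}$, and observe first that on $\Delta_p(a,r)$ the function $w$ is analytic---no pole lies there because $\varrho_p(a,\mathcal{P}_w)\geq s\geq r$---and univalent, since $r\leq\eta$ and $w\in\ULU(\eta)$. In particular $w'\neq 0$ on $\Delta_p(a,r)$, so a single-valued branch of $\log w'$ exists on this simply connected set.

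For part (i), I would uniformize: the map $T(\zeta)=\varphi_a(r\zeta)$ sends $\D$ onto $\Delta_p(a,r)$ with $T(0)=a$, so $F=w\circ T$ is analytic and univalent on $\D$. The Bieberbach bound $|a_2|\leq 2$ for normalized univalent functions gives $|F''(0)/F'(0)|\leq 4$. It then remains to unwind the chain rule
\[
\frac{F''(0)}{F'(0)} = \frac{w''(a)}{w'(a)}\,T'(0) + \frac{T''(0)}{T'(0)},
\]
using the elementary computations $T'(0)=-r(1-|a|^2)$ and $T''(0)/T'(0)=2r\overline{a}$. Solving for $w''(a)/w'(a)$ and using $|a|<1$ yields $|w''(a)/w'(a)|(1-|a|^2)\leq(4+2r)/r$, which is at most $6/r$ because $r\leq 1$; this is exactly the claimed constant $6/\min\{\eta,s\}$.

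For part (ii), the idea is that (i) is a local Bloch bound for $\log w'$ and hence controls its oscillation on hyperbolic balls. Fix $0<t<1$ and take $z_1,z_2\in\Delta_p(a,tr)$. For any $b$ in this smaller disc the triangle inequality for $\varrho_p$ gives $\varrho_p(b,\mathcal{P}_w)\geq s-tr\geq s(1-t)>0$, so applying (i) at $b$ (with $s$ replaced by $s(1-t)$) bounds the pre-Schwarzian by $|w''(b)/w'(b)|\leq 6/[(1-|b|^2)\min\{\eta,s(1-t)\}]$. Integrating $w''/w'$ along the hyperbolic geodesic $\gamma$ from $z_2$ to $z_1$---which remains inside $\Delta_p(a,tr)$ by the geodesic convexity of hyperbolic discs---gives
\[
\Big|\log\frac{w'(z_1)}{w'(z_2)}\Big| \leq \frac{6}{\min\{\eta,s(1-t)\}}\int_\gamma\frac{|db|}{1-|b|^2} \leq \frac{12\operatorname{arctanh}(tr)}{\min\{\eta,s(1-t)\}},
\]
where the integral is the conformally invariant hyperbolic length of $\gamma$, at most $2\operatorname{arctanh}(tr)$. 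Exponentiating produces the two-sided estimate \eqref{eq:myclaimpre} with $C=\exp\big(12\operatorname{arctanh}(tr)/\min\{\eta,s(1-t)\}\big)>1$, depending only on $\eta,s,t$.

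The computations are routine; the points that need care are the geometric bookkeeping---verifying via convexity that $\gamma$ stays in the region where (i) applies (so the pole distance stays bounded below) and applying the $\varrho_p$ triangle inequality correctly---together with the constant-chasing that lands exactly on $6/\min\{\eta,s\}$ in (i). An alternative to the integration step in (ii) is to pull back by $T$ as in (i) and invoke the Koebe distortion theorem directly for $|F'(\zeta_1)/F'(\zeta_2)|$ on the compact subdisc $\{|\zeta|\leq t\}$ (where $\zeta_i=\varphi_a(z_i)/r$), which avoids the hyperbolic-length estimate at the cost of the distortion theorem.
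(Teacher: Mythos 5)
Your proposal is correct. Part (i) is in substance identical to the paper's argument: the paper applies Pommerenke's inequality $\sup_z\bigl|(1-|z|^2)g_a''(z)/g_a'(z)-2\overline{z}\bigr|\le 4$ to $g_a(z)=w(\varphi_a(\nu z))$ at $z=0$, which is exactly your Bieberbach bound $|F''(0)/F'(0)|\le 4$ for the pullback, and the same chain-rule bookkeeping with $T'(0)=-\nu(1-|a|^2)$ and $T''(0)/T'(0)=2\nu\overline{a}$ lands on $6/\min\{\eta,s\}$.

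In part (ii) you take a mildly different route to the same end. The paper stays with the pullback $g_a$, notes $\nm{\log g_a'}_{\BLOCH}\le 6$, invokes the Lipschitz property of Bloch functions with respect to the hyperbolic metric (Duren--Schuster) on $D(0,t)$, and then pays a correction factor $(1+\nu)^2/(1-\nu)^2$ to pass from $g_a'=\nu\,(w'\circ\varphi_a(\nu\cdot))\,\varphi_a'(\nu\cdot)$ back to $w'$. You instead promote (i) to a pointwise Bloch-type bound on $(\log w')'=w''/w'$ throughout $\Delta_p(a,tr)$ --- using the (valid) triangle inequality for $\varrho_p$ to keep $\varrho_p(b,\mathcal{P}_w)\ge s(1-t)$ --- and integrate along the hyperbolic geodesic, which is legitimate since pseudo-hyperbolic discs are geodesically convex and $w'$ is zero-free on the simply connected set $\Delta_p(a,r)$. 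Your version avoids both the citation and the $\varphi_a'$ correction factor, at the cost of the triangle-inequality step and a slightly different (but still admissible) constant $C=C(\eta,s,t)$ involving $\min\{\eta,s(1-t)\}$; both arguments are the same underlying idea that a local Bloch bound controls the oscillation of $\log w'$ over hyperbolic balls.
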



\begin{proof}
(i)
Let $\nu=\min\{\eta,s\}$. Since $g_a(z)=w(\varphi_a(\nu z))$ in univalent and analytic in~$\D$,
we deduce
\begin{equation} \label{eq:pomlog}
\sup_{z\in\D} \,\left| (1-|z|^2) \frac{g_a''(z)}{g_a'(z)} -2\overline{z} \right| \leq 4
\end{equation}
by \cite[Lemma~1.3]{P:1975}. Hence
\begin{equation*}
\left| \frac{w''(a)}{w'(a)} (1-\abs{a}^2) \nu - 2 \overline{a}\nu \right| = \left| \frac{g''_a(0)}{g'_a(0)} \right| \leq 4,
\end{equation*}
from which the assertion follows.

(ii) As above, let $\nu=\min\{\eta,s\}$.
Since $g_a(z) = w\big(\varphi_a(\nu z)\big)$ is analytic and univalent in $\D$, $\log g_a'$ 
is a well-defined analytic function whose Bloch-norm satisfies $\nm{ \log g_a'}_{\BLOCH}\leq 6$ by \eqref{eq:pomlog}.
For more information on Bloch functions we refer to \cite{ACP:1974}.
Recall that Bloch functions are precisely those analytic functions in $\D$ which are Lipschitz when the
unit disc is endowed with the hyperbolic metric and the plane with the Euclidean metric.
Hence, if $\zeta_1,\zeta_2\in D(0,t)$ for some fixed $0<t<1$, then \cite[Proposition~1, p.~43]{DS:2004} implies
\begin{equation}
\left| \log \left| \frac{g_a'(\zeta_1)}{g_a'(\zeta_2)} \right| \right|
       \leq \frac{\nm{ \log g_a'}_{\BLOCH}}{2} \, \log\frac{1+\varrho_p(\zeta_1,\zeta_2)}{1-\varrho_p(\zeta_1,\zeta_2)}
      \leq 3 \log\frac{1+\frac{2t}{1+t^2}}{1-\frac{2t}{1+t^2}}. \label{eq:defC}
\end{equation}
Let $K=K(t)$ be the constant defined by the right-hand side of \eqref{eq:defC}.
Consequently,
\begin{equation*}
e^{-K} \leq \left| \frac{g_a'(\zeta_1)}{g_a'(\zeta_2)} \right| \leq e^K, \quad \zeta_1,\zeta_2\in D(0,t),
\end{equation*}
from which  \eqref{eq:myclaimpre} follows for $C =  e^K (1+\nu)^2/(1-\nu)^2$.
\end{proof}

For $n\in\N$, the arcs 
\begin{equation*}
\big\{ e^{i\theta} \in \partial\D :  (j-1)2^{-n+2} \pi \leq \theta \leq j 2^{-n+2} \pi  \big\}, \quad j=1,\dotsc, 2^{n-1},
\end{equation*}
having pairwise disjoint interiors, constitute the $n^{\text{th}}$ generation of dyadic subintervals of $\partial\D$ --- the first generation being
$\partial\D$ itself. Analogously,
we may define dyadic subintervals of any arc $I\subset\partial\D$.

The set
\begin{equation*} 
Q  =Q_I = \big\{ z\in\D : 1-\abs{I}/(2\pi) \leq |z| < 1, \, \arg{z}\in I \big\}
\end{equation*}
is called a Carleson square,
where the interval $I\subset \partial\D$ is said to be the base of $Q$. The length of $Q$ 
is defined to be $\ell(Q)=\abs{I}$ (the arc-length of $I$),
while the top part (or the top half) of $Q$ is 
\begin{equation*}
T(Q) = \big\{ z\in Q : 1-\ell(Q)/(2\pi) \leq |z| \leq 1-\ell(Q)/(4\pi) \big\}.
\end{equation*}
Let $z_Q$ denote the center point of $T(Q)$.
Dyadic subsquares of a Carleson square~$Q$ are those Carleson squares whose bases are
dyadic subintervals of the base of~$Q$. Finally, a Carleson square $S$ is said to the father
of $Q$ provided that $Q$ is a dyadic subsquare of $S$ and $\ell(Q) = \ell(S)/2$. 

The following lemma is reminiscent of Lemma~\ref{lemma:ulu}(ii), and
hence its proof is omitted. The key point is that the top part of each Carleson square can be covered
by  finitely many pseudo-hyperbolic discs of fixed radius.


\begin{lemma} \label{lemma:carlesonest}
Let $w\in \ULU(\eta)$ for some $0<\eta\leq 1$, and let $w'$ be normal.
Suppose that $s=s(\sigma(w'))$ is a constant
such that \eqref{eq:yama} holds, and define $\lambda=(9/10+s)/(1+9s/10)$. Then, there exists
a~constant $C_0=C_0(\eta,\sigma(w'))$ with $1<C_0<\infty$ such that the following conclusions hold. 
\begin{enumerate}
\item[\rm (i)]
If $Q$ is a Carleson square such that 
$\varrho_p\big(T(Q),\mathcal{P}_w\big) \geq \lambda$,
and $S$ is the father-square of $Q$, then
\begin{equation*} 
C_0^{-1} \leq \left| \frac{w'(z_Q)}{w'(z_S)} \right| \leq C_0.
\end{equation*}
The same conclusion holds if $Q$ is the father-square of $S$.

\item[\rm (ii)]
If $Q$ is a Carleson square such that 
$\varrho_p\big(T(Q),\mathcal{P}_w\big) \geq \lambda$, then 
\begin{equation*} 
C_0^{-1} \leq \left| \frac{w'(z_1)}{w'(z_2)} \right| \leq C_0, \quad z_1,z_2\in T(Q).
\end{equation*}

\item[\rm (iii)]
If $Q$ is a Carleson square such that 
$\varrho_p\big(T(Q),\mathcal{P}_w\big) < \lambda$,  then 
\begin{equation*}
\abs{w'(z)} \geq C_0^{-1}, \quad z\in T(Q).
\end{equation*}
\end{enumerate}
\end{lemma}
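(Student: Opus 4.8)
The plan is to reduce all three statements to Lemma~\ref{lemma:ulu}(ii) by a covering argument that exploits the fact that, in the hyperbolic metric, the top parts $T(Q)$ of all Carleson squares $Q$ are mutually congruent. First I would record the geometric input. There is a universal constant $d_0$ such that the hyperbolic diameter of every $T(Q)$ is at most $d_0$, and $z_Q$ and $z_S$ lie within a universal hyperbolic distance whenever $S$ is a neighbouring (father or son) square of $Q$; both facts hold because the top parts and their centers are carried onto one another by the automorphisms $\varphi_\kappa$ together with rotations, which are hyperbolic isometries. Consequently, for a fixed $0<t<1$, the set $T(Q)$ (respectively $T(Q)$ together with a hyperbolic segment joining $z_Q$ to $z_S$) can be covered by a number $N=N(\eta,s)$ of pseudo-hyperbolic discs $\Delta_p\big(a_j,t\min\{\eta,s\}\big)$, with $N$ independent of $Q$.

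The second ingredient is the arithmetic of the pseudo-hyperbolic metric. The constant $\lambda=(9/10+s)/(1+9s/10)$ is precisely the Möbius sum $(9/10)\oplus s$, and the Möbius-invariant triangle inequality for $\varrho_p$ shows that if $\varrho_p\big(T(Q),\mathcal{P}_w\big)\geq\lambda$, then every point within pseudo-hyperbolic distance $9/10$ of $T(Q)$ stays at distance at least $s$ from $\mathcal{P}_w$. The buffer $9/10$ is chosen large enough that the fixed geometry of the first paragraph, in particular $z_S$ and the connecting segment, lies inside this neighbourhood, and $t$ is then fixed so small that the covering discs lie there as well. This guarantees $\varrho_p(a_j,\mathcal{P}_w)\geq s$, so Lemma~\ref{lemma:ulu}(ii) applies on each disc and produces a factor $C=C(\eta,s,t)$. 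Chaining through the $N$ overlapping discs yields $C_0=C^N$ and proves (i) and (ii) at once: in (ii) one chains from $z_1$ to $z_2$ inside $T(Q)$, and in (i) from $z_Q$ to $z_S$ along the connecting segment.

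For (iii) the top part is close to a pole, and the plan is to combine the same covering scheme with the lower bound \eqref{eq:yama}. When $\varrho_p\big(T(Q),\mathcal{P}_w\big)<\lambda$ there is a pole $a\in\mathcal{P}_w$ such that all of $T(Q)$ lies within a universal pseudo-hyperbolic distance $R_0=R_0(s)$ of $a$. For $z\in T(Q)\cap\Delta_p(a,s)$ the estimate \eqref{eq:yama} gives $|w'(z)|\geq 1$ directly. For the remaining $z\in T(Q)$ I would connect $z$ to a point of $\Delta_p(a,s)$ by a chain of $O(1)$ pseudo-hyperbolic discs and transport the bound $|w'|\geq 1$ along the chain: at each center one either has distance at least $s$ from every pole, in which case Lemma~\ref{lemma:ulu}(ii) applies, or the center falls inside some $\Delta_p(a',s)$, in which case \eqref{eq:yama} already supplies $|w'|\geq 1$. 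Enlarging $C_0$ to absorb the accumulated factor completes the estimate.

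The \emph{main obstacle} is precisely this last chaining in (iii). Unlike in (i) and (ii), the connecting discs necessarily approach the pole, where Lemma~\ref{lemma:ulu}(ii) is unavailable, so the chain must be routed so that every center is either $s$-separated from $\mathcal{P}_w$ or already inside a disc where \eqref{eq:yama} holds, while the number of discs stays uniformly bounded. Verifying that these two regimes cover every step — and that the $\eta$-separation of the poles forced by $w\in\ULU(\eta)$ prevents the chain from being trapped between two distinct poles — is the delicate point; once it is checked, the resulting constant $C_0$ depends only on $\eta$ and $\sigma(w')$ (the latter through $s$), as asserted.
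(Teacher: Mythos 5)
Your proposal is correct and follows exactly the route the paper indicates: the authors omit the proof of this lemma, remarking only that it is "reminiscent of Lemma~\ref{lemma:ulu}(ii)" and that "the key point is that the top part of each Carleson square can be covered by finitely many pseudo-hyperbolic discs of fixed radius," which is precisely your covering-and-chaining argument, with the buffer $\lambda=(9/10)\oplus s$ playing the role you describe. The "main obstacle" you flag in (iii) is in fact automatic: every chain centre either satisfies $\varrho_p(\cdot,\mathcal{P}_w)\geq s$ (so Lemma~\ref{lemma:ulu}(ii) applies) or lies in some $\Delta_p(a',s)$ (so \eqref{eq:yama} gives $|w'|\geq1$), and the lower bound propagates by induction along the chain, so no special routing is needed.
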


The next result, which is based on an argument similar to that of \cite[Theorem~4]{BJ:1994}, 
allows us to control the number of those Carleson squares where $w'$ is small.
This information is crucial for our purposes since we want to prove that $1/w'$ belongs to some
Hardy space.


\begin{lemma} \label{lemma:L3}
Let $w$ be meromorphic in $\D$ such that $|S_w(z)|^2(1-|z|^2)^3\, dm(z)$ is a~Carleson measure,
and let $w'$ be normal.
Suppose that $C_0=C_0(\nm{S_w}_{F^2},\sigma(w'))$ with $1<C_0<\infty$ is the constant
ensured by Lemma~\ref{lemma:carlesonest}. 
Then, there exists a~constant $\varepsilon_0 = \varepsilon_0(\nm{S_w}_{F^2},\sigma(w'))$ with
$0<\varepsilon_0<\min\{1/4, C_0^{-1}\}$ having the following property:

If $Q$ is a Carleson square satisfying
$\abs{w'(z_Q)}\leq C_0^{-1/\varepsilon_0}$,
and $\{Q_j\}_{j=1}^\infty$ is the collection of maximal (with respect of inclusion) dyadic subsquares of $Q$ for which either
\begin{equation} \label{eq:assL3}
{\rm (i)}\,\,\,  \abs{w'(z_{Q_j})} \leq \varepsilon_0 \abs{w'(z_Q)} \qquad \text{or} \qquad 
{\rm (ii)} \,\,\,  \abs{w'(z_{Q_j})} \geq C_0^{-2},
\end{equation}
then $\sum_{j=1}^\infty \ell(Q_j) \leq \ell(Q)/2$.
\end{lemma}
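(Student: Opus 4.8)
The plan is to run the Bishop--Jones stopping-time argument of \cite[Theorem~4]{BJ:1994} on the dyadic tree of subsquares of $Q$, using $u=\log|w'|$ as the relevant data and the Carleson measure $|S_w(z)|^2(1-|z|^2)^3\,dm(z)$ to control its oscillation. Write $N\asymp\nm{S_w}_{F^2}^2$ for the Carleson constant of this measure and set $L=\log(1/\varepsilon_0)$, so that choosing $\varepsilon_0$ small is the same as choosing $L$ large. The first observation is that \emph{both} stopping alternatives in \eqref{eq:assL3} force a large oscillation of $u$: if $Q_j$ is a stopping square of type~(i), then $u(z_{Q_j})\le u(z_Q)+\log\varepsilon_0=u(z_Q)-L$ directly, while if $Q_j$ is of type~(ii), then the hypothesis $|w'(z_Q)|\le C_0^{-1/\varepsilon_0}$ gives $u(z_Q)\le -\varepsilon_0^{-1}\log C_0$, whereas $u(z_{Q_j})\ge -2\log C_0$, so that $u(z_{Q_j})-u(z_Q)\ge(\varepsilon_0^{-1}-2)\log C_0\ge L$ once $\varepsilon_0$ is small. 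Hence in all cases $|u(z_{Q_j})-u(z_Q)|\ge L$, and it suffices to bound the total length of the maximal subsquares where $\log|w'|$ has oscillated by at least $L$ away from its value at $z_Q$.

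Next I would show that the poles of $w$ lie \emph{below} every stopping square, so that Lemma~\ref{lemma:carlesonest} applies all the way down each descending chain. Indeed, if a dyadic square $R$ with $Q_j\subsetneq R\subseteq Q$ satisfied $\varrho_p(T(R),\mathcal{P}_w)<\lambda$, then Lemma~\ref{lemma:carlesonest}(iii) would give $|w'(z_R)|\ge C_0^{-1}>C_0^{-2}$, so that alternative~(ii) would already hold at $R$, contradicting the maximality of $Q_j$ (as $Q=Q_I$ itself satisfies neither alternative). Consequently every non-stopping ancestor $R$ of a stopping square has $\varrho_p(T(R),\mathcal{P}_w)\ge\lambda$, and the father--son estimate of Lemma~\ref{lemma:carlesonest}(i) yields bounded increments $|d(R)|\le\log C_0$ for $d(R)=u(z_R)-u(z_{R^*})$, where $R^*$ is the father of $R$. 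In particular the descent stays in the region where $\log w'$ is governed by the computation in the proof of Theorem~\ref{thm:fact}(i).

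With these increments under control I would interpret $\{u(z_R)\}$, indexed by the dyadic subsquares $R$ of $Q$, as a dyadic-martingale-type process stopped at the collection $\{Q_j\}$; here $\ell(Q)^{-1}\sum_j\ell(Q_j)$ is exactly the mass of the stopped set, i.e.\ the probability that a random dyadic branch from $Q$ reaches some $Q_j$, an event on which $|u(z_{Q_j})-u(z_Q)|\ge L$. A Chebyshev inequality and the quadratic-variation (square-function) bound for the stopped process then give
\begin{equation*}
\frac{1}{\ell(Q)}\sum_{j=1}^\infty\ell(Q_j)\le\frac{1}{L^2}\cdot\frac{1}{\ell(Q)}\sum_{R}\ell(R)\,d(R)^2,
\end{equation*}
the sum running over the non-stopping dyadic subsquares $R$ of $Q$. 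The discrete energy $\sum_R\ell(R)\,d(R)^2$ is comparable to the local Dirichlet integral $\int_Q|(\log w')'(z)|^2(1-|z|^2)\,dm(z)$, and reusing the identity $S_w=(\log w')''-\tfrac12\big((\log w')'\big)^2$ together with the estimates \eqref{eq:j1}--\eqref{eq:j3} from the proof of Theorem~\ref{thm:fact}(i) — now localized to the pole-free part of $Q$ and using $w''/w'\in H^\infty_1$ from Lemma~\ref{lemma:ulu}(i) — bounds this integral by $(\nm{S_w}_{F^2}^2+1)\,\ell(Q)=(N+1)\,\ell(Q)$. Combining, $\ell(Q)^{-1}\sum_j\ell(Q_j)\lesssim(N+1)/L^2$; since $N$ and the implied constant depend only on $\nm{S_w}_{F^2}$ and $\sigma(w')$, fixing $L$ large enough (equivalently $\varepsilon_0=e^{-L}$ small, so in particular $\varepsilon_0<\min\{1/4,C_0^{-1}\}$) makes the right-hand side at most $1/2$.

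The main obstacle is the square-function estimate of the preceding paragraph: one must turn the \emph{global} $\BMOA$ bound for $\log w'$ coming from Theorem~\ref{thm:fact}(i), which is only available when $w$ is analytic, into a \emph{local} statement valid on $Q$ despite the poles of $w$, and one must verify that the discrete tree-energy $\sum_R\ell(R)\,d(R)^2$ is indeed dominated by the continuous Dirichlet integral (which for the harmonic $u$ requires controlling the martingale \emph{drift}, that is, the discrepancy between $u(z_R)$ and the average of $u$ over the sons of $R$). Keeping the dependence of every constant on $\nm{S_w}_{F^2}$ and on the normality constant $\sigma(w')$ — the latter entering through $C_0$ and through the radius $s$ of \eqref{eq:yama} — transparent is the technical crux, since it is precisely this uniformity that allows $\varepsilon_0$ to be chosen depending only on $\nm{S_w}_{F^2}$ and $\sigma(w')$.
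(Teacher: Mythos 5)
Your stopping-time architecture matches the paper's in outline: both stopping alternatives force $|\log|w'(z_{Q_j})|-\log|w'(z_Q)||\gtrsim\log(1/\varepsilon_0)$, the maximality argument showing that no ancestor of a stopping square has $\varrho_p(T(R),\mathcal{P}_w)<\lambda$ (so that the region $\RM=Q\setminus\bigcup_j Q_j$ and in fact its $\lambda$-neighbourhood is pole-free and Lemma~\ref{lemma:carlesonest} applies along every chain) is exactly the paper's first observation, and your Chebyshev step is the discrete counterpart of the paper's integration of $|\log w'-\log w'(z_Q)|^2$ over the pairwise disjoint roofs $T_j\subset\partial\RM$. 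The difference is where the $L^2$ bound comes from, and that is where your argument has a genuine gap.

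You claim that the energy $\int_{\RM}|w''/w'|^2(1-|z|^2)\,dm(z)$ is bounded by $(\nm{S_w}_{F^2}^2+1)\,\ell(Q)$ by ``reusing the estimates \eqref{eq:j1}--\eqref{eq:j3} localized to the pole-free part of $Q$.'' This does not work. Those estimates rest on \cite[Corollary~7]{BJ:1994} applied to a \emph{globally analytic, locally univalent} $w$, for which Yamashita's theorem gives $w''/w'\in H^\infty_1$ with a global bound; here $w$ is meromorphic and the full integral $\int_Q|w''/w'|^2(1-|z|^2)\,dm$ even diverges when $Q$ contains a pole. If you instead try to run the Littlewood--Paley/Green's formula chain on $\RM$ directly, the identity $(\log w')''=S_w+\tfrac12(w''/w')^2$ produces the nonlinear term $\int_{\RM}|w''/w'|^4\,d(z,\partial\RM)^3\,dm$, and the only pointwise control available, Lemma~\ref{lemma:ulu}(i), gives $|w''/w'|\,d(z,\partial\RM)\le 6/\min\{\eta,s\}$ --- a constant that is \emph{not} small, so the resulting self-referential inequality $E\lesssim C+(6/\min\{\eta,s\})^2E$ cannot be closed by absorption. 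The paper's proof is organized precisely around this obstruction: it invokes Lemma~\ref{lemma:objective} (a nontrivial modification of \cite[pp.~105--107]{BJ:1994}) to trade the quartic term for $C_1\ell(Q)+\varepsilon^2\int_{\RM}|w''/w'|^2\,d(z,\partial\RM)\,dm$ with $\varepsilon$ \emph{arbitrarily small}, and then closes the loop with Green's formula on the chord-arc domain $\Phi(\RM)$. You flag this as ``the main obstacle'' but offer no substitute for Lemma~\ref{lemma:objective}; without it (and without the drift control in your discrete-to-continuous energy comparison, which you also leave open), the central estimate $\sum_R\ell(R)\,d(R)^2\lesssim(\nm{S_w}_{F^2}^2+1)\,\ell(Q)$ is unproved, and the whole argument collapses at its crux.
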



\begin{proof}
Let $\RM = Q \setminus \bigcup_{j=1}^\infty Q_j$. By Lemma~\ref{lemma:carlesonest}(iii), 
$\RM$ is a simply connected subset of~$\D$, which does
not contain any poles of $w$ (nor poles of $w'$ for that matter). Even more is true, the pseudo-hyperbolic neighborhood 
of the radius $\lambda=\lambda(\sigma(w'))$ of~$\RM$
does not contain any poles of $w$, see Lemma~\ref{lemma:carlesonest} for the precise definition of $\lambda$.

The function $\log w'$ is analytic in $\RM$. By a standard limiting argument we may assume that $\RM$ is
compactly contained in $\D$.
We know that $\RM$ is a chord-arc domain\footnote{
If $\gamma\subset\C$ is a locally rectifiable closed curve, and there exists a constant $1\leq C<\infty$ such that the shorter arc connecting
any two points $z_1,z_2\in\gamma$ has arc-length at most $C\abs{z_1-z_2}$, then $\gamma$ is called
chord-arc. In particular, a domain in $\C$ is called chord-arc if its boundary is a~chord-arc curve.
Chord-arc curves are also known as Lavrentiev curves, and they are precisely the bi-Lipschitz images of circles.} 
\cite[p.~25]{B:2002} with some absolute chord-arc constant $1\leq C<\infty$.
Let $\Phi(z)=(z-z_Q)/\ell(Q)$, and denote $\mathcal{D} =\Phi(\RM)$. 
Now, $\mathcal{D}$ is a simply connected bounded chord-arc domain, which contains
the origin. Since $d(z_Q,\partial\RM)\asymp \diam(\RM) \asymp \ell(Q)$, we have 
$d(0,\partial\mathcal{D})\asymp \diam(\mathcal{\mathcal{D}})\asymp 1$, where
$d$ denotes the Euclidean distance while $\diam$ is the Euclidean diameter.

Define $F(\zeta) = \log w'\big(\Phi^{-1}(\zeta)\big) - \log w'\big(\Phi^{-1}(0)\big)$
for $\zeta\in\mathcal{D}$, and note that $F$ is analytic in $\mathcal{D}$.  We have
\begin{equation} 
  \int_{\mathcal{D}} \abs{F''(\zeta)}^2 \, d(\zeta,\partial\mathcal{D})^3 \, dm(\zeta) 
   = \frac{1}{\ell(Q)} \int_{\RM} \big| \big(\log{w'}\big)''(z)\big|^2 \, d(z,\partial\RM)^3 \, dm(z),\label{eq:lp}
\end{equation}
where $z=\Phi^{-1}(\zeta)$ and $d(\zeta,\partial\mathcal{D}) = d(z,\partial\RM)/\ell(Q)$.
By the identity $(\log w')'' = S_w + 2^{-1} (w''/w')^2$, the estimate $d(z,\partial\RM) \leq 1-\abs{z}^2$ for all $z\in\RM$,
and the assumption that $| S_w(z) |^2 \, (1-\abs{z}^2)^3 \, dm(z)$ is a~Carleson measure, \eqref{eq:lp} implies
\begin{align}
 & \int_{\mathcal{D}} \abs{F''(\zeta)}^2 \, d(\zeta,\partial\mathcal{D})^3 \, dm(\zeta)\notag\\
      & \qquad \lesssim  \frac{1}{\ell(Q)} \int_{\RM} | S_w(z) |^2 \, (1-\abs{z}^2)^3 \, dm(z)
      +  \frac{1}{\ell(Q)} \int_{\RM} \left| \frac{w''(z)}{w'(z)} \right|^4 \, d(z,\partial\RM)^3 \, dm(z) \notag\\ 
      & \qquad \lesssim \nm{S_w}^2_{F^2} +  \frac{1}{\ell(Q)} \int_{\RM} \left| \frac{w''(z)}{w'(z)} \right|^4 \, d(z,\partial\RM)^3 \, dm(z) \label{eq:uint}
\end{align}
with absolute comparison constants.

Our argument is based on the following auxiliary result, whose proof
is omitted. The proof of Lemma~\ref{lemma:objective} is a laborious but straightforward modification of the argument
in \cite[pp.~105-107]{BJ:1994}.


\begin{lemma} \label{lemma:objective}
Under the assumptions of Lemma~\ref{lemma:L3}:
For each $0<\varepsilon<\infty$
there exists a constant $C_1= C_1(\varepsilon,\nm{S_w}_{F^2},\sigma(w'))$ with $0<C_1<\infty$ such that
\begin{equation} \label{eq:objective}
\int_{\RM} \left| \frac{w''(z)}{w'(z)} \right|^4 \, d(z,\partial\RM)^3 \, dm(z)
 \leq C_1 \, \ell(Q) + 
\varepsilon^2 \, \int_{\RM} \left| \frac{w''(z)}{w'(z)} \right|^2 \, d(z,\partial\RM) \, dm(z).
\end{equation}
\end{lemma}

We continue with the proof of Lemma~\ref{lemma:L3}.
By combining \eqref{eq:uint} and \eqref{eq:objective}, the change
of variable gives
\begin{equation} \label{eq:pre}
  \int_{\mathcal{D}} \abs{F''(\zeta)}^2 \, d(\zeta,\partial\mathcal{D})^3 \, dm(\zeta)
  \lesssim \nm{S_w}^2_{F^2} + C_1 + \varepsilon^2
   \int_{\mathcal{D}} \left| F'(\zeta) \right|^2 \, d(\zeta,\partial\mathcal{D}) \, dm(\zeta)
\end{equation}
with absolute comparison constants.
We apply a well-known version of Green's formula \cite[Lemma~3.6]{BJ:1994} for the domain $\mathcal{D}$.
Since 
$F(0)=0$ and $\ell(Q) \leq (4/3)(1-\abs{z_Q}^2)$, Lemma~\ref{lemma:ulu}(i) and \eqref{eq:pre} imply
\begin{align*}
\int_{\partial\mathcal{D}} \abs{F(\zeta)}^2 \, \abs{d\zeta}
   & \lesssim  \abs{F'(0)}^2 +   \int_{\mathcal{D}} \abs{F''(\zeta)}^2 \, d(\zeta,\partial\mathcal{D})^3 \, dm(\zeta)\\
   & \lesssim \left| \frac{w''(z_Q)}{w'(z_Q)} \right|^2 \ell(Q)^2 + \nm{S_w}^2_{F^2} + C_1
  + \varepsilon^2 \int_{\mathcal{D}} \left| F'(\zeta) \right|^2 \, d(\zeta,\partial\mathcal{D}) \, dm(\zeta)\\
    & \lesssim  \left( \frac{1}{\min\{\eta,s\}} \right)^2 + \nm{S_w}^2_{F^2} + C_1
+ \varepsilon^2 \int_{\partial\mathcal{D}} \abs{F(\zeta)}^2 \, \abs{d\zeta}
\end{align*}
with absolute comparison constants.
If $0<\varepsilon<\infty$ is sufficiently small, then the computation above shows that there 
exists a constant $C_2=C_2(\nm{S_w}_{F^2}, \sigma(w'))$ with $0<C_2<\infty$  such that
\begin{equation} \label{eq:kk}
\int_{\partial\RM} \big| \log w'(z) - \log w'(z_Q)\big|^2 \, \abs{dz}
    = \ell(Q) \, \int_{\partial\mathcal{D}} \abs{F(\zeta)}^2 \, \abs{d\zeta} \leq C_2 \, \ell(Q).
\end{equation}

Let $T_j$ denote the top of $\partial Q_j$ (i.e.~the roof of $Q_j$) for $j\in\N$. 
Since the roofs $T_j \subset \partial\RM$ are pairwise disjoint, 
we deduce from \eqref{eq:kk} that
\begin{equation} \label{eq:lengths}
\sum_{j=1}^\infty \, \int_{T_j} \big| \log w'(z) - \log w'(z_Q) \big|^2 \, \abs{dz} \leq C_2 \, \ell(Q).
\end{equation}
There are two types of subsquares $Q_j$, which result from 
\eqref{eq:assL3}. 
\begin{itemize}
\item[\rm (i)]
In the case of type (i) squares, Lemma~\ref{lemma:carlesonest}(iii)
shows that $\varrho_p(T(Q_j),\mathcal{P}_w)\geq \lambda$. Hence
Lemma~\ref{lemma:carlesonest}(ii) implies
\begin{equation*}
\abs{w'(z)} \leq C_0 \abs{w'(z_{Q_j})} \leq C_0 \, \varepsilon_0 \abs{w'(z_Q)}, \quad z\in T_j,
\end{equation*}
and further,
\begin{equation} \label{eq:log1x}
\begin{split}
\big| \log w'(z) - \log w'(z_Q) \big| 
         & \geq  \log{\abs{w'(z_Q)}} - \log{\abs{w'(z)}}\\
       & \geq \log{(C_0 \varepsilon_0)^{-1}} >0, \quad z\in T_j.
\end{split}
\end{equation}
\item[\rm (ii)]
In the case of type (ii) squares $Q_j$, let $S_j$ be their father-squares, respectively. Now,
by Lemma~\ref{lemma:carlesonest} 
\begin{equation*}
\abs{w'(z)} \geq C_0^{-1} \abs{w'(z_{S_j})} \geq C_0^{-2} \abs{w'(z_{Q_j})} \geq C_0^{-4}, \quad z\in T_j.
\end{equation*}
Since $\abs{w'(z_Q)} \leq C_0^{-1/\varepsilon_0}$, and $\varepsilon_0<1/4$, we get
\begin{equation} \label{eq:log2}
\begin{split}
  \big| \log w'(z) - \log w'(z_Q) \big| 
  & \geq \log \abs{w'(z)} - \log \abs{w'(z_Q)}\\
  &  = (\varepsilon_0^{-1} - 4) \log C_0>0, \quad z\in T_j.
\end{split}
\end{equation}
\end{itemize}

By combining \eqref{eq:lengths}, \eqref{eq:log1x} and \eqref{eq:log2},
and by choosing $\varepsilon_0=\varepsilon_0(\nm{S_w}_{F^2}, \sigma(w'))$ with $0<\varepsilon_0<C_0^{-1}$ sufficiently small, 
we conclude $\sum_{j=1}^\infty \ell(Q_j) \leq 2^{-1} \, \ell(Q)$.
The assertion of Lemma~\ref{lemma:L3} follows. 
\end{proof}


\section{Proof of Proposition~\ref{prop:normal}}

Let $f$ be a solution of \eqref{eq:de2} with $A\in H^\infty_2$, and let
$\{z_n\}_{n=1}^\infty$ be the zero-sequence of $f$.
Implication ${\rm (i)} \Rightarrow {\rm (ii)}$ is a direct consequence of \eqref{eq:normaldef}, and
hence it suffices to prove ${\rm (ii)} \Rightarrow {\rm (iii)}$ and ${\rm (iii)} \Rightarrow {\rm (i)}$.

${\rm (ii)} \Rightarrow {\rm (iii)}$: Denote $K = \sup_{n\in\N} \, (1-|z_n|^2) |f'(z_n)| < \infty$. Fix $n\in\N$,
and let $z\in \overline{D}(z_n,c(1-|z_n|))$ be any point satisfying
\begin{equation*}
\max_{\zeta\in \overline{D}(z_n,c(1-|z_n|))} |f(\zeta)| = |f(z)|.
\end{equation*}
Here $\overline{D}(z_n,c(1-|z_n|))$ denotes a closed Euclidean disc centered at $z_n\in\D$, and $c$ is a sufficiently
small constant (to be defined later). By means of \eqref{eq:de2} we deduce
\begin{align*}
|f(z)| & 
       = \left| \int_{z_n}^z  \left( f'(z_n) + \int_{z_n}^\zeta f''(s) \, ds \right) d\zeta \right|\\
       & \leq c(1-|z_n|) |f'(z_n)| + |f(z)| \, \int_{z_n}^z  \int_{z_n}^\zeta \frac{\nm{A}_{H^\infty_2}}{(1-|s|)^2} \, |ds|  |d\zeta|\\
       & \leq c K + |f(z)| \, \frac{c^2(1-|z_n|)^2\nm{A}_{H^\infty_2}}{(1-c)^2(1-|z_n|)^2}.
\end{align*}
If $c=c(\nm{A}_{H^\infty_2})$ is sufficiently small to satisfy $c^2 \nm{A}_{H^\infty_2}/(1-c)^2<1$, then we get
\begin{equation*}
|f(\zeta)| \leq \frac{cK}{1 - c^2 \nm{A}_{H^\infty_2}/(1-c)^2}, \quad \zeta \in \bigcup_{n=1}^\infty D\big(z_n,c(1-|z_n|) \big).
\end{equation*}

${\rm (iii)} \Rightarrow {\rm (i)}$: Suppose that $f$ satisfies
\begin{equation} \label{eq:kkass}
|f(z)| \leq C, \quad z\in \bigcup_{n=1}^\infty D\big( z_n,c(1-|z_n|) \big),
\end{equation}
for some constants $0<c<1$ and $0<C<\infty$. Let $g$ be a linearly independent solution to $f$ such
that the Wronskian determinant $W(f,g) = 1$. Define $w=g/f$, and notice that $S_w = 2A$ and $w'=1/f^2$.
Now $S_w\in H^\infty_2$ implies $w\in \ULU(\eta)$ for any sufficiently small
$\eta = \eta(\nm{S_w}_{H^\infty_2})$ by Nehari's theorem \cite[Corollary~6.4]{P:1982}.
Let $t=t(\nm{S_w}_{H^\infty_2},c)$ with $0<t<1$ be a sufficiently small 
constant such that 
\begin{enumerate}
\item[\rm (a)]
$\overline{\Delta}_p(z_n,\eta t)\subset D(z_n,c(1-|z_n|))$ for all $n\in\N$; 
\item[\rm (b)]
$2\eta t/(1+\eta^2 t^2)<\eta$.
\end{enumerate}
Here $\overline{\Delta}_p(a,\eta t)$ stands for the closed pseudo-hyperbolic disc of radius $\eta t$, centered at $a\in\D$.
We proceed to verify \eqref{eq:normaldef} in two parts.

First, suppose that $a\in\D$ is a point such that $\varrho_p(a,\mathcal{P}_w)\geq  \eta t$. By Lemma~\ref{lemma:ulu}(i)
we deduce
\begin{equation*}
(1-|a|^2) \, \frac{|f'(a)|}{1+|f(a)|^2}  
 \leq \frac{1}{4} \,  (1-|a|^2) \, \frac{|w''(a)|}{|w'(a)|} \leq \frac{3}{2\eta t}.
\end{equation*}

Second, suppose that $a\in\D$ is a point such that
$\varrho_p(a,\mathcal{P}_w)<  \eta t$, or equivalently, $a\in\Delta_p(z_n,\eta t)$
for some $n\in\N$.
By the maximum modulus principle 
there exists a~point $s_n \in \partial\Delta_p(z_n,\eta t)$ such 
that
\begin{equation*}
\max_{z\in\overline{\Delta}_p(z_n,\eta t)} \abs{f'(z)} = \abs{f'(s_n)}.
\end{equation*}
Note that $\Delta_p(s_n,\eta t)$ does not
contain any zeros of $f$ (any such zero would lie too close to $z_n$ by the condition (b) and the fact $w\in\ULU(\eta)$).
Lemma~\ref{lemma:ulu}(i) yields
\begin{equation} \label{eq:maxmod}
(1-\abs{s_n}^2) \abs{f'(s_n)} \leq \frac{3}{\eta t} \, \abs{f(s_n)}.
\end{equation}
Since $a,s_n\in\overline{\Delta}_p(z_n,\eta t)$, there exists a constant $K=K(\nm{S_w}_{H^\infty_2},c)$
with $1<K<\infty$ such that
$1/K \leq (1-\abs{a}^2)/(1-\abs{s_n}^2) \leq K$.
By means of the maximum modulus principle, \eqref{eq:kkass} and \eqref{eq:maxmod} we deduce
\begin{equation*}
 (1-\abs{a}^2) \frac{\abs{f'(a)}}{1+\abs{f(a)}^2}
   \leq \frac{1-|a|^2}{1-|s_n|^2} \Big(  (1-\abs{s_n}^2) \abs{f'(s_n)} \Big)  \frac{1}{1+\abs{f(a)}^2}
   \leq \frac{3CK}{\eta t}.
\end{equation*}

We have proved $\sigma(f)<\infty$, and hence $f$ is normal. This concludes the proof of Proposition~\ref{prop:normal}.


\section{Proof of Theorem~\ref{thm:carleson}}

We proceed to show that the non-tangential maximal function
\begin{equation*} 
(1/w')^\star(e^{i\theta}) = \sup_{z\in\Gamma_\alpha(e^{i\theta})} \, \frac{1}{\abs{w'(z)}}, \quad
e^{i\theta}\in\partial\D,
\end{equation*}
belongs to the weak Lebesgue space $L^p_w(\partial\D)$ for some $0<p<\infty$,
which is to say that there exists a constant $C=C(\alpha,w')$ with $0<C<\infty$ such that
the distribution function satisfies
\begin{equation*} 
\big| \big\{ e^{i\theta} \in\partial\D : (1/w')^{\star}(e^{i\theta})  > \lambda \big\} \big|
     \leq \frac{C}{\lambda^p}, \quad 0<\lambda<\infty.
\end{equation*}
This leads to the assertion, since $L^p_w(\partial\D) \subset L^q(\partial\D)$ for any $0<q<p$.
Here $\Gamma_\alpha(e^{i\theta}) = \{z\in\D : \abs{z-e^{i\theta}} \leq \alpha(1-\abs{z})\}$, for fixed $1<\alpha<\infty$,
is a non-tangential approach region with vertex at $e^{i\theta}\in\partial\D$ with aperture of $2\arctan\sqrt{\alpha^2-1}$,
and the absolute value of the set is 
its one dimensional Lebesgue measure. 

Let $C_0 = C_0(\nm{S_w}_{F^2}, \sigma(w'))$ with $1<C_0 < \infty$ 
be the constant ensured by Lemma~\ref{lemma:carlesonest},
and $\varepsilon_0 = \varepsilon_0(\nm{S_w}_{F^2}, \sigma(w'))$ with $0<\varepsilon_0<\min\{ 1/4,C_0^{-1}\}$
be the constant resulting from Lemma~\ref{lemma:L3}.
We consider the collection $\mathcal{G}_0$ of maximal (with respect to inclusion) dyadic subsquares
of $\D$, of at least second generation, satisfying
\begin{equation*} 
|w'(z_{Q})|\leq C_0^{-1/\varepsilon_0}, \quad Q\in\mathcal{G}_0.
\end{equation*}
Denote $L=L(C_0,\varepsilon_0) = C_0^{1+1/\varepsilon_0}$ and $M=M(C_0,\varepsilon_0) = C_0/\varepsilon_0$,
for short. The maximality and Lemma~\ref{lemma:carlesonest} imply 
\begin{equation*} 
\abs{w'(z)}>L^{-1}, \quad z\in\D \setminus \bigcup_{Q\in\mathcal{G}_0} Q,
\end{equation*}
and $|w'(z_{Q})|> L^{-1}$ for all $Q\in\mathcal{G}_0$.

Apply Lemma~\ref{lemma:L3} to each $Q\in\mathcal{G}_0$ to get the collection $\mathcal{S}_Q$ of 
dyadic subsquares of~$Q$, where the subsquares are maximal with the property
\begin{equation*}
\abs{w'(z_S)} \leq \varepsilon_0 \, \abs{w'(z_{Q})},
\quad S\in\mathcal{S}_Q.
\end{equation*}
Note that the squares in the family $\mathcal{S}_Q$ are either the ones appearing in (i) of Lemma~\ref{lemma:L3} or
are contained in the ones appearing in (ii).
Let $\mathcal{G}_1 = \bigcup_{Q\in\mathcal{G}_0} \mathcal{S}_Q$.
For any $Q\in\mathcal{G}_0$,
the maximality and Lemma~\ref{lemma:carlesonest} yield 
\begin{equation*}
\abs{w'(z)}
> \varepsilon_0 \, \abs{w'(z_{Q})} \, C_0^{-1}
> L^{-1} M^{-1}, \quad
z\in\D \setminus \bigcup_{Q^{\star}\in\mathcal{G}_1} Q^{\star};
\end{equation*}
$|w'(z_S)|
> \varepsilon_0 \, \abs{w'(z_{Q})} \, C_0^{-1}
>L^{-1}M^{-1}$  for all $S\in\mathcal{S}_Q$;
and moreover, we have $\sum_{S\in\mathcal{S}_Q} \ell(S) \leq \ell(Q)/2$ by Lemma~\ref{lemma:L3}.
We deduce
\begin{equation*}
\sum_{Q\in\mathcal{G}_1} \ell(Q) \leq \frac{1}{2} \, \sum_{Q\in\mathcal{G}_0} \ell(Q).
\end{equation*}
Repeating this process inductively, we obtain collections $\mathcal{G}_n$ for $n\in\N$
such that
\begin{equation*}
\abs{w'(z)}> L^{-1} M^{-n}, \quad
z\in\D \setminus \bigcup_{Q\in\mathcal{G}_n}^\infty Q, \quad n\in\N,
\end{equation*}
and
\begin{equation*}
\sum_{Q\in \mathcal{G}_n} \ell(Q) \leq \frac{1}{2^n} \sum_{Q\in\mathcal{G}_0} \ell(Q), \quad n\in\N.
\end{equation*}

Fix any $\lambda$ for which $L M \leq \lambda <\infty$,
and choose the natural number $N$ such that $L M^N \leq \lambda < L M^{N+1}$. Now
\begin{align*}
\big\{ e^{i\theta} \in\partial\D : ( 1/w' )^{\star} (e^{i\theta})  > \lambda \big\}
 & = \Big\{ e^{i\theta} \in\partial\D : \inf_{z\in\Gamma_\alpha(e^{i\theta})} \abs{w'(z)} < 1/\lambda \Big\}\\
& \subset \Big\{ e^{i\theta} \in\partial\D : \inf_{z\in\Gamma_\alpha(e^{i\theta})} |w'(z)|  < L^{-1} M^{-N}  \Big\}.
\end{align*}
By the inductive process above, there exists a constant $K=K(\alpha)$ with $1<K<\infty$  such that
\begin{equation*}
\big| \big\{ e^{i\theta} \in\partial\D : \left( 1/w' \right)^{\star} (e^{i\theta})  > \lambda \big\} \big|
        \leq \frac{K}{2^N} \sum_{Q\in\mathcal{G}_0} \ell(Q)
         \leq   \frac{4\pi K L^{(\log_2 M)^{-1}}}{\lambda^{(\log_2 M)^{-1}}}.
\end{equation*}
Hence $(1/w')^\star\in L^p_w(\partial\D)$  for $p= 1/( \log_2 C_0/\varepsilon_0 )$,
and consequently, $1/w' \in H^q$ for any $0<q<p$. This proves the assertion of Theorem~\ref{thm:carleson}.


\section{Proof of Corollary~\ref{cor:34}}

Let $f$ be a normal non-trivial solution of \eqref{eq:de2}, where
$|A(z)|^2(1-|z|^2)^3\, dm(z)$ is a~Carleson measure. Let $g$ be
a solution of \eqref{eq:de2} such that $W(f,g)=1$.
Now $w=g/f$ satisfies $S_w = 2 A$.
Since $w' = 1/f^2$ is normal ($\sigma(w') \leq 2 \,\sigma(f) < \infty$),
Theorem~\ref{thm:carleson} asserts that 
for any sufficiently small $0<p<\infty$
there exists a constant $C=C(p,\nm{A}_{F^2}, \sigma(f))$ with $1<C<\infty$ such that
$\nm{f}_{H^{2p}}^{2p} = \nm{1/w'}^p_{H^p} \leq C$.


\end{document}